\newcommand{\Rmnum}[1]{\expandafter\@slowromancap\romannumeral #1@}
\let\@fnsymbol\@arabic
\begin{document}
\newtheorem{theorem}{Theorem}[section]
\newtheorem{observation}[theorem]{Observation}
\newtheorem{corollary}[theorem]{Corollary}
\newtheorem{algorithm}[theorem]{Algorithm}
\newtheorem{problem}[theorem]{Problem}
\newtheorem{question}[theorem]{Question}
\newtheorem{lemma}[theorem]{Lemma}
\newtheorem{proposition}[theorem]{Proposition}
\newtheorem{definition}[theorem]{Definition}
\newtheorem{guess}[theorem]{Conjecture}
\newtheorem{claim}[theorem]{Claim}
\newtheorem{example}[theorem]{Example}
\makeatletter
  \newcommand\figcaption{\def\@captype{figure}\caption}
  \newcommand\tabcaption{\def\@captype{table}\caption}
\makeatother

\newtheorem{acknowledgement}[theorem]{Acknowledgement}

\newtheorem{axiom}[theorem]{Axiom}
\newtheorem{case}[theorem]{Case}
\newtheorem{conclusion}[theorem]{Conclusion}
\newtheorem{condition}[theorem]{Condition}
\newtheorem{conjecture}[theorem]{Conjecture}
\newtheorem{criterion}[theorem]{Criterion}
\newtheorem{exercise}[theorem]{Exercise}
\newtheorem{notation}[theorem]{Notation}
\newtheorem{solution}[theorem]{Solution}
\newtheorem{summary}[theorem]{Summary}
\newtheorem{fact}[theorem]{Fact}
\newtheorem{remark}{Remark}

\newcommand{\pp}{{\it p.}}
\newcommand{\de}{\em}
\newcommand{\mad}{\rm mad}

\newcommand*{\QEDA}{\hfill\ensuremath{\blacksquare}}
\newcommand*{\QEDB}{\hfill\ensuremath{\square}}

\newcommand{\qf}{Q({\cal F},s)}
\newcommand{\qff}{Q({\cal F}',s)}
\newcommand{\qfff}{Q({\cal F}'',s)}
\newcommand{\f}{{\cal F}}
\newcommand{\ff}{{\cal F}'}
\newcommand{\fff}{{\cal F}''}
\newcommand{\fs}{{\cal F},s}
\newcommand{\g}{\gamma}
\newcommand{\wrt}{with respect to }

\title{\bf Upper bounds for the $MD$-numbers and characterization of extremal graphs\footnote{Supported by NSFC No.11871034 and 11531011.}}

\renewcommand{\thefootnote}{\arabic{footnote}}

\author{\small Ping Li, Xueliang Li\\
\small Center for Combinatorics and LPMC\\
\small  Nankai University, Tianjin 300071, China\\
\small Email: qdli\underline{ }ping@163.com, lxl@nankai.edu.cn\\
}

\date{}
\maketitle

\begin{abstract}
For an edge-colored graph $G$, we call an edge-cut $M$ of $G$ monochromatic if the edges of $M$ are colored with the same color.
The graph $G$ is called monochromatic disconnected if any two distinct vertices of $G$ are separated by a monochromatic edge-cut.
For a connected graph $G$, the monochromatic disconnection number
(or $MD$-number for short) of $G$, denoted by $md(G)$, is the maximum number of colors that are allowed in order to make $G$
monochromatic disconnected. For graphs with diameter one, they are complete graphs and so
their $MD$-numbers are $1$. For graphs with diameter at least 3, we can construct $2$-connected graphs such that their
$MD$-numbers can be arbitrarily large; whereas for graphs $G$ with diameter two, we show that if $G$ is a $2$-connected graph then
$md(G)\leq 2$, and if $G$ has a cut-vertex then $md(G)$ is equal to the number of blocks of $G$. So, we will focus on studying
$2$-connected graphs with diameter two, and give two upper bounds of their $MD$-numbers depending on their connectivity and
independent numbers, respectively. We also characterize the $\left\lfloor\frac{n}{2}\right\rfloor$-connected graphs (with large connectivity)
whose $MD$-numbers are $2$ and the $2$-connected graphs (with small connectivity) whose $MD$-numbers
archive the upper bound $\left\lfloor\frac{n}{2}\right\rfloor.$
For graphs with connectivity less than $\frac n 2$, we show that if the connectivity of a graph is in linear with its order $n$, then
its $MD$-number is upper bounded by a constant, and this suggests us to leave a conjecture that for a $k$-connected graph $G$,
$md(G)\leq \left\lfloor\frac{n}{k}\right\rfloor$. \\[2mm]
{\bf Keywords:} monochromatic disconnection number, connectivity, diameter, independent number, upper bound, extremal graph.\\[2mm]
{\bf AMS subject classification (2020)}: 05C15, 05C40, 05C35.
\end{abstract}

\baselineskip16pt

\section{Introduction}

Let $G$ be a graph and let $V(G)$, $E(G)$ denote the vertex-set and the edge-set of $G$, respectively.
We use $|G|$ and $||G||$ to denote the number of vertices and the number of edges of $G$, respectively,
and call them the order and the size of $G$.
If there is no confusion, we also use $n$ and $m$ to denote $|G|$ and $||G||$, respectively, throughout this paper.
Let $S$ and $F$ be a vertex subset and an edge subset of $G$, respectively. Then $G-S$ is the graph obtained from $G$
by deleting the vertices of $S$ together with the edges incident with vertices of $S$, and $G-F$ is the graph whose
vertex-set is $V(G)$ and edge-set is $E(G)-F$. Let $G[S]$ and $G[F]$ be the subgraphs of $G$ induced, respectively,
by $S$ and $F$. We use $[r]$ to denote the set $\{1,2,\cdots,r\}$ of positive integers.
If $r=0$, then set $[r]=\emptyset$. For all other terminology and notation not defined here we follow Bondy and Murty \cite{B}.

For a graph $G$, let $\Gamma: E(G)\rightarrow [r]$ be an {\em edge-coloring} of $G$ that allows a
same color to be assigned to adjacent edges. For an edge $e$ of $G$, we use $\Gamma(e)$ to denote the color of $e$.
If $H$ is a subgraph of $G$, we also use $\Gamma(H)$ to denote the set of colors
on the edges of $H$ and use $|\Gamma(H)|$ to denote the number of colors in $\Gamma(H)$.
For an edge-colored graph $G$ and a vertex $v$ of $G$, the {\em color-degree}
of $v$, denoted by $d^c(v)$, is the number of colors appearing on the edges incident
with $v$.

The three main colored connection colorings: rainbow connection coloring \cite{CJMZ},
proper connection coloring \cite{BFGM} and proper-walk connection coloring \cite{BBY},
monochromatic connection coloring \cite{CY}, have been well-studied in recent years.
As a counterpart concept of the rainbow connection coloring,
rainbow disconnection coloring was introduced in \cite{CDHHZ} by Chartrand et al. in 2018.
Subsequently, the concepts of monochromatic disconnection coloring and
proper disconnection coloring were also introduced in \cite{LL} and \cite{BCJLW, CLLW}.
We refer to \cite{BL} for the philosophy of studying these so-called global graph colorings.
More details on the monochromatic disconnection coloring can be found in \cite{LLi}.
We will further study this coloring in this paper and get some deeper and stronger results.

For an edge-colored graph $G$, we call an edge-cut $M$
a {\em monochromatic edge-cut} if the edges of $M$ are colored with the same color.
If there is a monochromatic $uv$-cut with color $i$, then we say that  color $i$
{\em separates} $u$ and $v$. We use $C_\Gamma(u,v)$ to denote the set of colors in $\Gamma(G)$
that separate $u$ and $v$, and let $c_\Gamma(u,v)=|C_\Gamma(u,v)|$.

An edge-coloring of a graph is called a {\em monochromatic disconnection coloring}
(or {\em $MD$-coloring for short}) if each pair of distinct vertices of the graph has a monochromatic edge-cut separating them,
and the graph is called {\em monochromatic disconnected}.
For a connected graph $G$, the {\em monochromatic disconnection number
(or $MD$-number for short)} of $G$,
denoted by $md(G)$, is defined as the {\em maximum} number of colors that are allowed in order to
make $G$ monochromatic disconnected. An {\em extremal $MD$-coloring} of $G$ is
an $MD$-coloring that uses $md(G)$ colors. If $H$ is a subgraph of $G$ and $\Gamma$ is an edge-coloring of $G$,
we call $\Gamma$ an edge-coloring {\em restricted} on $H$.

The following terminology and notation are needed in the sequel.
Let $G$ and $H$ be two graphs. The {\em union} of $G$ and $H$ is the graph $G\cup H$ with vertex-set
$V(G)\cup V(H)$ and edge-set $E(G)\cup E(H)$. The {\em intersect} of $G$ and $H$ is the graph $G\cap H$
with vertex-set $V(G)\cap V(H)$ and edge-set $E(G)\cap E(H)$.
The {\em Cartesian product} of $G$ and $H$ is the graph $G\Box H$ with $V(G\Box H)=\{(u,v):u\in V(G),v\in V(H)\}$,
$(u,v)$ and $(x,y)$ are adjacent in $G\Box H$ if either $ux$ is an edge of $G$ and $v=y$, or $vy$ is an edge of $H$ and $u=x$.
If $G$ and $H$ are vertex-disjoint, then let $G\vee H$ denote the {\em join} of $G$ and $H$
which is obtained from $G$ and $H$ by adding an edge between every vertex of $G$ and every vertex of $H$.

For a graph $G$, a {\em pendent vertex} of $G$ is a vertex with degree one.
The {\em ends} of $G$ is the set of pendent vertices,
and the {\em internal vertex set} of $G$ is the set of vertices with degree at least two.
We use $end(G)$ and $I(G)$ to denote the ends of $G$
and the internal vertex set of $G$, respectively.
The {\em independent number} of $G$, denoted by $\alpha(G)$, is the order of a maximum independent set of $G$.
For two vertices $u,v$ of $G$, we use $N(u)$ to denote the {\em neighborhood} of $u$ in $G$,
and $N(u,v)$ to denote the set of common neighbors of $u$ and $v$ in $G$. The distance between $u$ and $v$ in $G$
is denoted by $d(u,v)$, and the diameter of $G$ is denoted by $diam(G)$.
We call a cycle $C$ (path $P$) a {\em $t$-cycle} ({\em $t$-path}) if $|C|=t$ ($||P||=t$).
If $t$ is even (odd), then we call the path an {\em even (odd) path} and the cycle an {\em even (odd) cycle}.
A $3$-cycle is also called a {\em triangle}.
A {\em matching-cut} of $G$ is an edge-cut of $G$, which also forms a matching in $G$.

In \cite{LL, LLi} we got the following results, which are restated for our later use.

\begin{lemma}\label{S-mono-1}\cite{LL}
\begin{enumerate}
\item If a connected graph $G$ has $r$ blocks $B_1,\cdots,B_r$,
      then $md(G)=\sum_{i\in [r]}md(B_i)$ and $md(G)=n-1$ if and only if $G$ is a tree.
\item $md(G)=\lfloor\frac{|G|}{2}\rfloor$ if $G$ is a cycle,
      and $md(G)=1$ if $G$ is a complete graph with order at least two.
\item If $H$ is a connected spanning subgraph of $G$, then $md(H)\geq md(G)$. Thus, $md(G)\leq n-1$.
\item If $G$ is connected, then $md(v\vee G)=1$.
\item If $v$ is neither a cut-vertex nor a pendent
  vertex of $G$ and $\Gamma$ is an extremal $MD$-coloring of $G$, then $\Gamma(G)\subseteq \Gamma(G-v)$, and
  thus, $md(G)\leq md(G-v)$.
\end{enumerate}
\end{lemma}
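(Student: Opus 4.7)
The overall strategy exploits two principles: every edge-cut of $G$ lies inside a single block, and monochromatic cuts of $G$ restrict nicely to connected spanning subgraphs. For part (1), restricting any MD-coloring $\Gamma$ of $G$ to a block $B_i$ yields an MD-coloring of $B_i$; conversely, gluing MD-colorings of the blocks with pairwise disjoint palettes produces an MD-coloring of $G$, since pairs lying in different blocks are separated by a monochromatic cut of any intermediate block along the block-cut tree. This gives $md(G)=\sum_i md(B_i)$. The tree characterization then follows because each edge of a tree is its own block with $md=1$, and conversely $md(G)=n-1$ forces every block to be a single edge via parts (2)--(3) applied block-wise.

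For part (2), in a cycle each color class must contain a cut of size at least two, so $md(C_n)\leq \lfloor n/2 \rfloor$, and pairing consecutive edges along the cycle realizes this bound. For $K_n$, any monochromatic cut separating two vertices corresponds to a bipartition $(S,T)$, and because $K_n$ contains every $S$-to-$T$ edge, the cut is the entire $K_{|S|,|T|}$; a short case analysis over triples of vertices then propagates one color throughout $E(K_n)$. For part (3), given an MD-coloring $\Gamma$ of $G$ with $md(G)$ colors, I restrict $\Gamma$ to $H$: for every pair $u,v$ and monochromatic $G$-cut $M$ with bipartition $(S,T)$, the set of $H$-edges crossing $(S,T)$ is a nonempty (since $H$ is connected and spanning) monochromatic $H$-cut separating $u,v$, and each color class survives for the same reason, so the restriction still uses all $md(G)$ colors and $md(H)\geq md(G)$. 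Applying this to a spanning tree of $G$ and invoking (1) yields $md(G)\leq n-1$.

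For part (4), the claim is that every MD-coloring of $v\vee G$ is monochromatic. For any two vertices $x,y\in V(G)$, consider a monochromatic cut $M$ of color $c$ separating them with bipartition $(S,T)$: since $v$ is adjacent to every vertex of $V(G)$, $v$ lies on one side, which forces either $vy$ (when $v\in S$) or $vx$ (when $v\in T$) to belong to $M$ and therefore carry color $c$. Iterating this observation across all pairs of vertices of $G$ and chaining with the monochromatic cut separating $v$ from a chosen neighbor forces every edge of $v\vee G$ to receive the same color, hence $md(v\vee G)=1$.

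For part (5), since $v$ is not a cut-vertex, $G-v$ is connected, and the cut-restriction argument of (3) shows the restriction of $\Gamma$ to $G-v$ is an MD-coloring of $G-v$. The key step is to prove that no color of $\Gamma$ is used exclusively on edges incident to $v$. Because $G-v$ is connected, any proper nonempty subset of $v$-edges fails to disconnect $G$, so the only monochromatic cut living entirely among $v$-edges is the full star at $v$; hence if some color $i$ were $v$-exclusive, every $v$-edge would necessarily have color $i$. The main obstacle is to convert this structural information into a contradiction with the extremality of $\Gamma$: using $d(v)\geq 2$, one redistributes the colors at the star together with a compensating color swap on a well-chosen edge of $G-v$ to exhibit an MD-coloring of $G$ with $md(G)+1$ colors. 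Once this case is ruled out, the restriction of $\Gamma$ to $G-v$ uses all colors of $\Gamma$, and $md(G)\leq md(G-v)$ follows.
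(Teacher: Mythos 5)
The paper itself does not prove this lemma; it is quoted verbatim from reference [LL], so your proposal can only be judged on its own merits. Most of it is sound: the block decomposition in (1), the restriction-of-cuts argument in (3) (including the observation, which you state a bit tersely, that every used colour contains an entire monochromatic cut and hence survives restriction to a connected spanning subgraph), and the triangle-propagation idea for $K_n$ and for $v\vee G$ all work. But there are two genuine problems.

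First, your construction for the cycle in part (2) is wrong: colouring \emph{consecutive} pairs of edges does not give an $MD$-colouring. Already for $C_4=v_1v_2v_3v_4v_1$ with $\Gamma(v_1v_2)=\Gamma(v_2v_3)=1$ and $\Gamma(v_3v_4)=\Gamma(v_4v_1)=2$, every edge-cut separating $v_1$ from $v_3$ takes one edge from each colour class and is never monochromatic. The correct construction pairs \emph{opposite} (antipodal) edges, which is forced by the paper's own observation that the opposite edges of any $4$-cycle must share a colour; the upper bound $\lfloor n/2\rfloor$ in your argument is fine.

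Second, and more seriously, the crux of part (5) is left unproved. You correctly reduce to the situation where every edge at $v$ carries a colour $i$ that appears nowhere in $G-v$, but your proposed finish --- ``redistribute the colours at the star together with a compensating colour swap on a well-chosen edge of $G-v$'' --- is never specified, and the naive version fails: giving one star edge a fresh colour destroys every monochromatic cut separating $v$ from that neighbour, since any such cut other than the full star must contain an edge of $G-v$ of a different colour. No recoloring or appeal to extremality is needed here. Since $v$ is not a pendant vertex it has two distinct neighbours $u_1,u_2$, and no monochromatic cut can separate them: a cut of colour $c\neq i$ contains no star edge, so $v,u_1,u_2$ lie on one side; a cut of colour $i$ consists only of star edges, so it leaves no $G-v$ edge between the two sides, contradicting that $G-v$ is connected. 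This contradicts $\Gamma$ being an $MD$-colouring at all, which gives $\Gamma(G)\subseteq\Gamma(G-v)$ directly and then $md(G)\leq md(G-v)$.
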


\begin{theorem} \label{S-2con}\cite{LL}
If $G$ is a $2$-connected graph, then $md(G)\leq \left\lfloor\frac{n}{2}\right\rfloor$.
\end{theorem}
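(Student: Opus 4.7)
The plan is induction on $n = |G|$. The base case $n = 3$ is immediate: the only 2-connected graph is $K_3$, and Lemma~\ref{S-mono-1}(2) gives $md(K_3) = 1 = \lfloor 3/2 \rfloor$.

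For the inductive step $n \geq 4$, I first handle the case where $G$ is a cycle, since Lemma~\ref{S-mono-1}(2) yields $md(G) = \lfloor n/2 \rfloor$ directly. Otherwise $G$ is 2-connected but not a cycle. I then pick a vertex $v \in V(G)$ and apply Lemma~\ref{S-mono-1}(5); this is valid because in a 2-connected graph on $n \geq 3$ vertices every vertex is both non-cut and non-pendent. Hence $md(G) \leq md(G - v)$. If $G - v$ is itself 2-connected, the inductive hypothesis gives $md(G - v) \leq \lfloor (n-1)/2 \rfloor \leq \lfloor n/2 \rfloor$. Otherwise, $G - v$ has blocks $B_1, \ldots, B_r$, and Lemma~\ref{S-mono-1}(1) gives $md(G - v) = \sum_{i \in [r]} md(B_i)$. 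Each $B_i$ is either a single edge (with $md(B_i) = 1$) or a 2-connected graph on fewer than $n$ vertices, so the inductive hypothesis yields $md(B_i) \leq \lfloor |B_i|/2 \rfloor$ in both cases. Together with the standard identity $\sum_i (|B_i| - 1) = n - 2$, a parity analysis of the block sizes confirms $\sum_i \lfloor |B_i|/2 \rfloor \leq \lfloor n/2 \rfloor$ provided $G - v$ has at most two blocks.

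The key obstacle is thus to choose a vertex $v$ so that $G - v$ has at most one cut-vertex. For 2-connected non-cycle graphs, I plan to locate such $v$ via an ear decomposition $G = C_0 \cup P_1 \cup \cdots \cup P_t$ with $t \geq 1$: a degree-2 internal vertex on the last ear $P_t$ can be removed so that $G - v$ reduces to a 2-connected remainder plus at most a single pendent path, yielding at most two blocks. For the residual case of minimally 2-connected graphs where this still fails, I anticipate a smoothing/contraction argument: replacing a degree-2 vertex $v$ with the edge $u_1 u_2$ (where $u_1, u_2$ are its non-adjacent neighbors, guaranteed to be non-adjacent in a minimally 2-connected graph on at least four vertices by Dirac's chord-free characterization) produces a 2-connected graph $G'$ on $n-1$ vertices, to which the inductive hypothesis applies. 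The main technical hurdle is to transfer the extremal MD-coloring from $G$ to $G'$ with at most a controlled loss of colors, invoking Lemma~\ref{S-mono-1}(5) to ensure that the colors at $v$ are also realized elsewhere in $G$.
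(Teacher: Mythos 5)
Theorem \ref{S-2con} is quoted from \cite{LL}, so the paper itself contains no proof of it; the closest in-paper surrogate is the chain of inequalities in the proof of Lemma \ref{2-con-eard}, which is the standard argument: fix an ear decomposition $(L_0;L_1,\cdots,L_t)$ with $L_0$ a cycle, note that some colour of each ear $L_i$ must already occur on $\bigcup_{l<i}L_l$ (otherwise the two ends of $L_i$ cannot be separated) and that every genuinely new colour must occur on at least two edges of $L_i$, so $L_i$ contributes at most $\left\lfloor\frac{||L_i||-1}{2}\right\rfloor$ new colours; summing gives $\left\lfloor\frac{n}{2}\right\rfloor$. Your route (delete a vertex, then use Lemma \ref{S-mono-1}(1) on the blocks of $G-v$) is different, and it has a genuine gap.

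The claim that a degree-two internal vertex of the last ear ``can be removed so that $G-v$ reduces to a $2$-connected remainder plus at most a single pendent path, yielding at most two blocks'' fails for long ears: a pendent path with $k$ edges is itself $k$ blocks, so if the last ear has $\ell$ edges, the best deletion leaves the core plus a pendent path with $\ell-2$ edges, and by Lemma \ref{S-mono-1}(1) one gets exactly $md(G-v)=md(\mathrm{core})+(\ell-2)$. This is not merely a lossy estimate --- the quantity $md(G-v)$ itself really exceeds $\left\lfloor\frac{n}{2}\right\rfloor$. Concretely, let $G$ be the $\theta$-graph formed by three internally disjoint $u$--$v$ paths of length $10$, so $n=29$ and $md(G)=14$ (this is the construction of Remark 1). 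Deleting any degree-two vertex leaves $C_{20}$ plus a pendent path with $8$ edges, giving $md(G-v)=10+8=18$, and deleting $u$ or $v$ leaves a tree with $md=27$; so no choice of $v$ makes your inequality $md(G)\leq md(G-v)$ useful. This graph is minimally $2$-connected, so your proposal routes it to the smoothing fallback --- but that fallback is exactly where the content of the theorem lives. To make it work you would need an upper bound on how much $md$ can grow under subdivision (a converse to Lemma \ref{S-mono-path}), and no such bound is available from Lemma \ref{S-mono-1}; proving it amounts to the two colour-counting claims above. You flag this transfer step as ``the main technical hurdle'' and leave it unresolved, so the proof is incomplete. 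I would abandon the deletion/block strategy and argue directly on an ear decomposition as in the proof of Lemma \ref{2-con-eard}.
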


\begin{theorem} \cite{LLi}\label{S-mono-Box}
If $G_1$ and $G_2$ are connected graphs, then $md(G_1\Box G_2)=md(G_1)+md(G_2)$.
\end{theorem}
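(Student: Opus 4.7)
My plan is to establish the two inequalities separately. Throughout, call an edge of $G_1\Box G_2$ \emph{horizontal} if it has the form $(u_1,v)(u_2,v)$ for some $u_1u_2\in E(G_1)$ and \emph{vertical} if it has the form $(u,v_1)(u,v_2)$ for some $v_1v_2\in E(G_2)$; every edge is of exactly one type. For $v\in V(G_2)$, let $G_1^v$ denote the horizontal copy of $G_1$ in $G_1\Box G_2$ induced by $\{(u,v):u\in V(G_1)\}$, and define $G_2^u$ for $u\in V(G_1)$ analogously.

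For the lower bound $md(G_1\Box G_2)\ge md(G_1)+md(G_2)$, I would construct an explicit $MD$-coloring. Take extremal $MD$-colorings $\Gamma_1$ of $G_1$ and $\Gamma_2$ of $G_2$ with disjoint color sets, and color each horizontal edge $(u_1,v)(u_2,v)$ by $\Gamma_1(u_1u_2)$ and each vertical edge $(u,v_1)(u,v_2)$ by $\Gamma_2(v_1v_2)$. To verify the $MD$-property, for any pair $(u_1,v_1)\ne(u_2,v_2)$ with $u_1\ne u_2$ (the other case is symmetric), take a monochromatic $c$-cut $M_1$ of $\Gamma_1$ separating $u_1$ and $u_2$ in $G_1$; its lift $M=\{(x,w)(y,w):xy\in M_1,\,w\in V(G_2)\}$ is monochromatic of color $c$ in $G_1\Box G_2$, and because vertical edges stay within single columns, the $V(G_1)$-partition induced by $M_1$ lifts to a $V(G_1\Box G_2)$-partition separating $(u_1,v_1)$ from $(u_2,v_2)$.

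For the upper bound, let $\Gamma$ be an extremal $MD$-coloring of $G=G_1\Box G_2$ with $k=md(G)$ colors. The key observation is that $\Gamma|_{G_1^v}$ is an $MD$-coloring of $G_1^v\cong G_1$ for each $v$: for any two vertices $(u_1,v),(u_2,v)$ of $G_1^v$, a monochromatic $c$-cut $M$ of $\Gamma$ separating them in $G$ restricts to $M\cap E(G_1^v)$, which is monochromatic of color $c$ and still separates them in $G_1^v$, since any $G_1^v$-path avoiding $M\cap E(G_1^v)$ would be a $G$-path avoiding $M$. Hence $|\Gamma(G_1^v)|\le md(G_1)$ for every $v$, and analogously $|\Gamma(G_2^u)|\le md(G_2)$ for every $u$.

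The main obstacle, which I expect to be the hardest step, is combining these per-copy bounds into a global bound on $k$, since different horizontal (or vertical) copies may use distinct color sets. My plan is to prove the structural claim that there exist $u_0\in V(G_1)$ and $v_0\in V(G_2)$ with $\Gamma(G)=\Gamma(G_1^{v_0})\cup\Gamma(G_2^{u_0})$; granting this, $k\le|\Gamma(G_1^{v_0})|+|\Gamma(G_2^{u_0})|\le md(G_1)+md(G_2)$. Writing $L_c=\{v:c\in\Gamma(G_1^v)\}$ and $R_c=\{u:c\in\Gamma(G_2^u)\}$ for each color $c$, the claim is equivalent to $\bigcup_c(V(G_1)\setminus R_c)\times(V(G_2)\setminus L_c)\ne V(G_1)\times V(G_2)$. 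My approach is to exploit that a color concentrated on few copies has geometrically constrained monochromatic cuts in $G$, bounding which pairs it can separate and hence the size of its ``bad'' region. As a backup route, Lemma~\ref{S-mono-1}(5) enables induction on $|V(G_2)|$ by peeling off non-cut non-pendant vertices of $G$ (every vertex qualifies when both factors are nontrivial and connected), reducing to $G_1\Box(G_2-v^*)$ while controlling how the color set changes.
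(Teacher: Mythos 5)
Your lower-bound construction and your observation that $\Gamma$ restricted to each copy $G_1^v$ (resp.\ $G_2^u$) is an $MD$-coloring of that copy are both correct (the latter is exactly the paper's general remark that an $MD$-coloring of $G$ restricts to an $MD$-coloring of any subgraph). But the argument stops precisely at the step you yourself identify as the hardest one: you never prove the structural claim that $\Gamma(G)=\Gamma(G_1^{v_0})\cup\Gamma(G_2^{u_0})$, you only describe two candidate strategies. The first (``geometrically constrained cuts'') is not developed at all, and the backup induction via Lemma~\ref{S-mono-1}(5) runs the wrong way: that lemma gives $md(G)\leq md(G-v)$, so peeling off a copy of $G_1$ yields $md(G_1\Box G_2)\leq md(G_1\Box(G_2-v^*))$, and since $md(G_2-v^*)$ can exceed $md(G_2)$ (deletion never decreases the $MD$-number of the factor), the induction does not close. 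As written, the upper bound is not established, so there is a genuine gap. (Note also that this paper only cites the theorem from \cite{LLi}; it contains no proof to compare against, so the burden is entirely on your argument.)

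The missing idea is short and is already listed among the paper's ``easy observations'': under any $MD$-coloring, opposite edges of a $4$-cycle receive the same color. For $u_1u_2\in E(G_1)$ and $v_1v_2\in E(G_2)$, the $4$-cycle on $(u_1,v_1),(u_2,v_1),(u_2,v_2),(u_1,v_2)$ forces $\Gamma\bigl((u_1,v_1)(u_2,v_1)\bigr)=\Gamma\bigl((u_1,v_2)(u_2,v_2)\bigr)$; since $G_2$ is connected, the color above each fixed edge $u_1u_2$ of $G_1$ is the same in \emph{every} horizontal copy, so all copies $G_1^v$ are identically colored, and symmetrically for the vertical copies. Hence the set of colors on horizontal edges equals $\Gamma(G_1^{v_0})$ for any $v_0$ and has size at most $md(G_1)$, the set of colors on vertical edges has size at most $md(G_2)$, and every edge is of one of the two types, giving $|\Gamma(G)|\leq md(G_1)+md(G_2)$. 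This proves your structural claim in the strongest possible form (for every pair $(u_0,v_0)$) and completes the proof; your proposal is salvageable once this observation is inserted, but without it the upper bound is unproven.
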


\begin{lemma}\cite{LLi}\label{S-mono-matching}
If $G$ has a matching-cut, then $md(G)\geq 2$.
\end{lemma}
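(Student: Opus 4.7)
My plan is to exhibit a concrete $2$-coloring of $E(G)$ and then verify that it is an $MD$-coloring. Let $M$ be a matching-cut of $G$ with associated vertex-bipartition $V(G)=V_1\cup V_2$, so that $M=[V_1,V_2]$. I would color every edge of $M$ with color $1$ and every edge of $E(G)\setminus M$ with color $2$. Assuming $G\ne K_2$ (which is the only connected graph where $E(G)\setminus M$ could be empty, since a matching on at least three vertices is disconnected), both colors actually appear, so the coloring really uses $2$ colors.

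I will then verify the $MD$-property in two cases. For a pair $u\in V_1$, $v\in V_2$, the matching-cut $M=[V_1,V_2]$ is itself a monochromatic $uv$-cut of color $1$, and we are done. For a pair $u,v$ on the same side, say $u,v\in V_1$, I pass to the spanning subgraph $G_1$ whose edges are exactly those of color $1$; by construction $G_1$ is precisely the matching $M$. In $G_1$ every component has at most two vertices, and the component $S$ of $u$ contains at most one vertex of $V_1$, namely $u$ itself (its other vertex, if any, is the $M$-partner of $u$ and lies in $V_2$). Hence $v\notin S$, and the edge-cut $[S,V(G)\setminus S]$ of $G$ contains no color-$1$ edge, because any such edge would contradict $S$ being a full component of $G_1$. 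Therefore $[S,V(G)\setminus S]$ is a monochromatic $uv$-cut of color $2$, finishing this case.

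The only conceptual point I expect to state carefully is the translation ``$G$ admits a monochromatic $uv$-cut of color $c$'' iff ``$u$ and $v$ lie in different components of the spanning subgraph of $G$ formed by the edges of color different from $c$''. Once this equivalence is written down cleanly, the two cases above close the argument immediately, and I do not foresee any further obstacle.
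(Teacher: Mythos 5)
Your argument is correct: the paper itself gives no proof of this lemma (it is quoted from reference [LLi]), and the two-coloring you describe --- color the matching-cut $M$ with one color and $E(G)\setminus M$ with another, then separate cross pairs by $M$ itself and same-side pairs by the color-$2$ edge-cut around the (at most two-vertex) component of $u$ in the subgraph $M$ --- is exactly the standard verification one would expect. Your explicit exclusion of $G=K_2$ (the only connected graph where the second color would be missing, and indeed the one case where the statement as literally written fails, since $md(K_2)=1$) is a careful and correct touch.
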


We will list some easy observations in the following, which will be used
many times throughout this paper. Suppose $\Gamma$ is an $MD$-coloring of $G$.
If $H$ is a subgraph of $G$, then $\Gamma$ is an $MD$-coloring restricted on $H$.
Every triangle of $G$ is monochromatic. If $G$ is a $4$-cycle, then its  opposite
edges have the same color. If $G$ is a $5$-cycle, then there are two adjacent edges
having the same color.

Let $V$ be a set of vertices and let $\mathcal{E}\subseteq 2^V$.
Then a hypergraph $\mathcal{H}=(V,\mathcal{E})$ is a {\em linear hypergraph}
if $|E_i|\geq 2$ and $|E_i\cap E_j|\leq 1$ for any $E_i,E_j\in \mathcal{E}$.
The {\em size} of $\mathcal{H}$ is the number of hyperedges in $\mathcal{H}$.
A {\em hyperedge-coloring} of $\mathcal{H}$ assigns each hyperedge a positive integer.
A linear hypergraph $\mathcal{H}$ (say the size of $\mathcal{H}$ is $k$) is a {\em linear hypercycle}
if there is a sequence of hyperedges of $\mathcal{H}$,
say $E_1,\cdots, E_k$, and there exist $k$ distinct
vertices $v_1,\cdots,v_k$ of $\mathcal{H}$,
such that $E_1\cap E_k=\{v_k\}$ and $E_i\cap E_{i+1}=\{v_i\}$
for $i\in[k-1]$. If we delete a hyperedge from a linear hypercycle
and then delete the vertices only in this hyperedge,
then we call the resulting hypergraph a {\em linear hyperpath}.
A linear hypercycle (linear hyperpath) is called a {\em linear hyper $k$-cycle (linear hyper $k$-path)} if the size of this
linear hypercycle (linear hyperpath) is $k$.

\section{Preliminaries}

We need some more preparations before proceeding to our main results.

\begin{lemma}\label{2-con-md1}
For two connected graphs $G_1$ and $G_2$, if $md(G_1\cap G_2)=1$ then $md(G_1\cup G_2)=md(G_1)+md(G_2)-1$.
\end{lemma}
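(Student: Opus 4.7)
The plan is to prove $md(G_1\cup G_2)\le md(G_1)+md(G_2)-1$ and $md(G_1\cup G_2)\ge md(G_1)+md(G_2)-1$ separately. For the upper bound, I will first record the restriction principle: if $\Gamma$ is an $MD$-coloring of a graph $G$ and $H$ is a connected subgraph of $G$, then $\Gamma|_H$ is an $MD$-coloring of $H$, because intersecting any monochromatic $uv$-cut of $G$ with $E(H)$ still meets every $uv$-path of $H$ and is still monochromatic. Taking an extremal $\Gamma$ of $G_1\cup G_2$ and applying this to $G_1,G_2,G_1\cap G_2$ gives $|\Gamma(G_i)|\le md(G_i)$ and $|\Gamma(G_1\cap G_2)|=md(G_1\cap G_2)=1$; since $\Gamma(G_1\cap G_2)\subseteq\Gamma(G_1)\cap\Gamma(G_2)$, inclusion-exclusion yields
\[
md(G_1\cup G_2)=|\Gamma(G_1)|+|\Gamma(G_2)|-|\Gamma(G_1)\cap\Gamma(G_2)|\le md(G_1)+md(G_2)-1.
\]

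For the lower bound, I will build an $MD$-coloring $\Gamma$ of $G_1\cup G_2$ using $md(G_1)+md(G_2)-1$ colors. Take extremal $MD$-colorings $\Gamma_1,\Gamma_2$ of $G_1,G_2$; the restriction principle forces each $\Gamma_i|_{G_1\cap G_2}$ to be monochromatic. After relabeling I may assume $\Gamma_1,\Gamma_2$ share a single color $\alpha$ on $G_1\cap G_2$ and are otherwise disjoint, and then define $\Gamma$ by $\Gamma(e)=\Gamma_i(e)$ for $e\in E(G_i)$ (unambiguous on $E(G_1\cap G_2)$). This uses exactly $md(G_1)+md(G_2)-1$ colors, and it remains to check that $\Gamma$ is $MD$ on $G_1\cup G_2$.

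The hard part will be this verification. Set $S=V(G_1\cap G_2)$ and $T_i=\Gamma_i^{-1}(\alpha)$, so $E(G_1\cap G_2)\subseteq T_1\cap T_2$. The crux is the following structural claim: for any two distinct $s,s'\in S$, the $\Gamma_i$-cut separating them in $G_i$ must use color $\alpha$. Indeed, if some class $\Gamma_i^{-1}(c)$ with $c\neq\alpha$ were a $ss'$-cut, it would avoid $E(G_1\cap G_2)$, leaving the connected subgraph $G_1\cap G_2$ intact in $G_i-\Gamma_i^{-1}(c)$ and placing $s,s'$ in the same component, a contradiction. Consequently $T_i$ itself separates every two distinct $S$-vertices in $G_i$, so each component of $G_i-T_i$ contains at most one vertex of $S$.

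With this discreteness I will handle each pair $u,v$ by cases. When $u,v\in V(G_i)$ and some $\Gamma_i$-cut between them has color $c\neq\alpha$, the class $\Gamma_i^{-1}(c)$ avoids $E(G_1\cap G_2)$, so $S$ lies in one component of $G_i-\Gamma_i^{-1}(c)$ and the other graph $G_{3-i}$ attaches wholly to that component through $S$; thus $\Gamma_i^{-1}(c)$ remains a $uv$-cut in $G_1\cup G_2$. The remaining sub-case — only color $\alpha$ works in the relevant $\Gamma_i$, or $u\in V(G_1)\setminus S$ and $v\in V(G_2)\setminus S$ — will be handled by showing that $T_1\cup T_2$ itself is a $uv$-cut of $G_1\cup G_2$. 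Here the discreteness of the $S$-partitions forces the bipartite relation ``components of $G_1-T_1$ are linked to components of $G_2-T_2$ through shared $S$-vertices'' to be a matching, so the components of $(G_1\cup G_2)-(T_1\cup T_2)$ are precisely the unions along this matching together with the unmatched classes, and a brief case check — on whether $u$'s and $v$'s respective components each contain an $S$-vertex — confirms that $u,v$ lie in disjoint components of $(G_1\cup G_2)-(T_1\cup T_2)$.
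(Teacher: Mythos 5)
Your overall architecture is the same as the paper's: the upper bound via the restriction principle plus inclusion--exclusion is correct and complete, and the lower bound via gluing extremal colorings of $G_1$ and $G_2$ along the single shared color $\alpha$ is exactly the paper's construction. Your structural claim that each component of $G_i-T_i$ contains at most one vertex of $S$, and your treatment of the case where some color $c\neq\alpha$ separates $u,v$ inside one $G_i$, are correct and in fact more carefully justified than the paper's one-line assertions $C_{\Gamma_i}(u,v)\subseteq C_{\Gamma'}(u,v)$.

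However, there is a genuine error in your final case. You claim that whenever $u\in V(G_1)\setminus S$ and $v\in V(G_2)\setminus S$, the class $T_1\cup T_2$ is a $uv$-cut of $G_1\cup G_2$. This is false: the component $D_u$ of $G_1-T_1$ containing $u$ and the component $D_v$ of $G_2-T_2$ containing $v$ may each contain an $S$-vertex, and it may be the \emph{same} $S$-vertex, in which case $D_u$ and $D_v$ are glued into one component of $(G_1\cup G_2)-(T_1\cup T_2)$. Concretely, take $G_1$ the path $u,s,s'$ and $G_2$ the path $v,s,s'$, so $G_1\cap G_2$ is the edge $ss'$; with $\Gamma_1(us)=2$, $\Gamma_2(vs)=3$, $\Gamma_i(ss')=\alpha=1$, removing the color-$1$ edge leaves $u,s,v$ connected, so color $1$ does not separate $u$ from $v$ (colors $2$ and $3$ do). Your ``brief case check'' therefore does not close this configuration. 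The gap is repairable: if $D_u$ and $D_v$ share the vertex $s\in S$, then $\alpha$ does not separate $u$ from $s$ in $G_1$, so some $c\neq\alpha$ does; since $\Gamma_1^{-1}(c)$ avoids $E(G_1\cap G_2)$, all of $S$ lies in the component of $G_1-\Gamma_1^{-1}(c)$ containing $s$, hence $u$'s component contains no $S$-vertex and survives as a component of $(G_1\cup G_2)-\Gamma^{-1}(c)$ not containing $v$. (This is essentially how the paper argues the cross case, by separating $u$ or $v$ from a fixed vertex $w$ of $G_1\cap G_2$.) As written, though, the step ``$T_1\cup T_2$ is a $uv$-cut'' would fail, so the verification is incomplete.
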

\begin{proof}
Let $G=G_1\cup G_2$ and $\Gamma$ be an extremal $MD$-coloring of $G$.
Then $|\Gamma(G_1\cap G_2)|=1$ and
$\Gamma$ is an $MD$-coloring restricted on $G_1$ (and also $G_2$).
So, $md(G_1\cup G_2)=|\Gamma(G_1)|+|\Gamma(G_2)|-|\Gamma(G_1\cap G_2)|\leq md(G_1)+md(G_2)-1$.
On the other hand,
since $E(G_1\cap G_2)$ is monochromatic under any $MD$-coloring of $G_1\cup G_2$,
let $\Gamma_i$ be an $MD$-coloring of $G_i$ for $i\in[2]$
such that $\Gamma_1(G_1\cap G_2)=\Gamma_2(G_1\cap G_2)=\Gamma(G_1)\cap \Gamma(G_2)$.
Let $\Gamma'$ be an edge-coloring of $G_1\cup G_2$ such that $\Gamma'(e)=\Gamma_i(e)$
if $e\in E(G_i)$, and let $w$ be a vertex of $G_1\cap G_2$.
Then for any two vertices $u,v$ of $G_1\cup G_2$,
if $u,v\in V(G_i)$, then $C_{\Gamma_i}(u,v)\subseteq C_{\Gamma'}(u,v)$;
if $u\in V(G_1)-V(G_2)$ and $v\in V(G_2)-V(G_1)$,
then $(C_{\Gamma_1}(u,w)\cup C_{\Gamma_2}(v,w))\subseteq C_{\Gamma'}(u,v)$.
So, $\Gamma'$ is an $MD$-coloring of $G$, i.e.,
$md(G_1\cup G_2)\geq |\Gamma(G_1\cup G_2)|=md(G_1)+md(G_2)-1$.
Therefore, $md(G_1\cup G_2)=md(G_1)+md(G_2)-1$.
\end{proof}

\begin{lemma}\label{S-mono-path}
Let $G$ be a connected graph
and let $G'$ be a graph obtained from $G$
by replacing an edge $e=ab$ with a path $P$.
Then $md(G')\geq md(G)+\left\lfloor\frac{||P||-1}{2}\right\rfloor$.
\end{lemma}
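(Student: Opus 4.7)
The plan is to lift $md(G')$ above $md(G)$ by packaging the extra colors as those contributed by a single auxiliary cycle, then invoking Lemmas \ref{2-con-md1} and \ref{S-mono-1}(3). Let $t=\|P\|$. The key arithmetic observation is
\[
\left\lfloor\frac{t-1}{2}\right\rfloor \;=\; \left\lfloor\frac{t+1}{2}\right\rfloor - 1 \;=\; md(C_{t+1})-1,
\]
where $C_{t+1}$ denotes a cycle of length $t+1$; this is exactly the gain predicted by combining Lemma \ref{2-con-md1} with the cycle formula from Lemma \ref{S-mono-1}(2). This strongly suggests building the auxiliary graph $H := G\cup C$, where $C$ is the $(t+1)$-cycle formed by the $t$ edges of $P$ together with the deleted edge $e=ab$.

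First I would verify the input to Lemma \ref{2-con-md1}. Both $G$ and $C$ are connected. Because the internal vertices of $P$ are freshly introduced and do not lie in $V(G)$, the intersection $G\cap C$ has vertex set $\{a,b\}$ and edge set $\{e\}$, i.e.\ $G\cap C\cong K_2$. By Lemma \ref{S-mono-1}(2), $md(G\cap C)=1$, so Lemma \ref{2-con-md1} applies and gives
\[
md(H)\;=\;md(G)+md(C)-1\;=\;md(G)+\left\lfloor\frac{t+1}{2}\right\rfloor-1\;=\;md(G)+\left\lfloor\frac{t-1}{2}\right\rfloor.
\]
Next I would observe that $G'=H-e$: removing $e$ from $H=G\cup C$ destroys the chord but keeps the subdivision path $P$ between $a$ and $b$, which is precisely the description of $G'$. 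Since $V(G')=V(H)$ and $G'$ remains connected (the surviving path $P$ reconnects $a$ and $b$ inside $G'$), $G'$ is a connected spanning subgraph of $H$. Lemma \ref{S-mono-1}(3) therefore yields $md(G')\ge md(H)$, which combined with the identity above is the claimed inequality.

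There is no real obstacle: the argument is a two-lemma assembly once the cycle $C$ is chosen. The only points that need care are the degenerate case $t=1$ (where $P=e$ and $G'=G$, so the bound is trivial; in fact one avoids invoking the cycle formula on a ``$2$-cycle''), and confirming that the vertex identification $V(G)\cap V(P)=\{a,b\}$ is built into the phrase ``replacing an edge $e=ab$ with a path $P$.'' Both are easy to address in one sentence each at the start of the write-up.
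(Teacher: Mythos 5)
Your proof is correct, but it takes a genuinely different route from the paper's. The paper argues by explicit construction: it takes an extremal $MD$-coloring of $G$, writes $P=ae_1c_1\cdots e_tb$, assigns a fresh color $|\Gamma(G)|+i$ to the symmetric pair $e_i,e_{t+1-i}$ for $i\in[\lfloor\frac{t-1}{2}\rfloor]$, gives the middle edge (or middle pair of edges) the old color of $e$, and then asserts that the result is an $MD$-coloring of $G'$. You instead form $H=G\cup C$ with $C$ the $(t+1)$-cycle $P\cup e$, note $G\cap C\cong K_2$ so that Lemma \ref{2-con-md1} and the cycle formula of Lemma \ref{S-mono-1}(2) give $md(H)=md(G)+\lfloor\frac{t+1}{2}\rfloor-1=md(G)+\lfloor\frac{t-1}{2}\rfloor$, and then delete the chord $e$ and apply the spanning-subgraph monotonicity of Lemma \ref{S-mono-1}(3). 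All three invocations check out: the internal vertices of $P$ are new, so the intersection really is the single edge $e$ with $md=1$; $G'=H-e$ is spanning and connected; and your handling of the degenerate case $t=1$ (where $C$ would be a $2$-cycle) is the right precaution. What each approach buys: yours is a clean two-lemma assembly that replaces the paper's ``it is easy to verify'' step with black-box citations, at the cost of importing the (somewhat longer) proof of Lemma \ref{2-con-md1}; the paper's version is self-contained and, as a by-product, exhibits an explicit extremal coloring of $G'$ extending the given one on $G$, which is the kind of concrete object the later constructions (e.g.\ Remark 1 and Lemma \ref{S-mono-ex}) echo, though none of them formally requires more than the inequality you prove.
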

\begin{proof}
Let $\Gamma$ be an extremal $MD$-coloring of $G$.
Let $||P||=t$ and
let $P=ae_1c_1\cdots e_tb$.
Let $\Gamma'$ be an edge-coloring of $G'$ such that $\Gamma(f)=\Gamma'(f)$ when $f\in E(G)-e$,
$\Gamma'(e_i)=\Gamma'(e_{t+1-i})=|\Gamma(G)|+i$ for $i\in[\left\lfloor\frac{t-1}{2}\right\rfloor]$,
$\Gamma(e)=\Gamma'(e_{\frac{t+1}{2}})$
when $t$ is odd, and $\Gamma(e)=\Gamma'(e_{\frac{t}{2}})
=\Gamma'(e_{\frac{t}{2}+1})$ when $t$ is even.
It is easy to verify that $\Gamma'$ is an $MD$-coloring of $G'$.
Thus, $md(G')\geq md(G)+\left\lfloor\frac{||P||-1}{2}\right\rfloor$.
\end{proof}

\begin{lemma}\label{super-conn}
Suppose $u,v$ are nonadjacent vertices of $G$
and $\Gamma$ is an extremal $MD$-coloring of $G$.
Let $C_\Gamma(u,v)=\{t\}$ and $e$ an extra edge, and let $\Gamma'$ be an edge-coloring of $G\cup e$ that is
obtained from $\Gamma$ by coloring the added edge $e$ with color $t$.
Then $\Gamma'$ is an $MD$-coloring of $G\cup e$ and $md(G)=md(G\cup e)$.
\end{lemma}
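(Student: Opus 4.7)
The plan is to prove the two claims in sequence, with the equality $md(G)=md(G\cup e)$ following easily once $\Gamma'$ has been verified to be an $MD$-coloring (I read $e$ as the edge $uv$, since $u,v$ are assumed nonadjacent). Observe first that $|\Gamma'|=|\Gamma|$, because the color $t$ given to the new edge $e$ already appears in $\Gamma$. Hence, once $\Gamma'$ is shown to be an $MD$-coloring of $G\cup e$, we get $md(G\cup e)\geq |\Gamma'|=md(G)$; the reverse inequality is free from Lemma \ref{S-mono-1}(3), since $G$ is a connected spanning subgraph of $G\cup e$. Combining the two bounds gives $md(G)=md(G\cup e)$.

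To verify that $\Gamma'$ is an $MD$-coloring of $G\cup e$, I would fix an arbitrary pair of distinct vertices $x,y\in V(G\cup e)$ and exhibit a monochromatic $xy$-edge-cut in $G\cup e$. Since $\Gamma$ is an $MD$-coloring of $G$, there is already a monochromatic $xy$-edge-cut $M$ in $G$, say of color $c$. Let $S$ denote the vertex set of the component of $G-M$ containing $x$, so that $y\in V(G)\setminus S$ and $M$ consists exactly of the edges of $G$ having one end in $S$ and the other in $V(G)\setminus S$. The decisive step is a case analysis on how the endpoints $u,v$ of $e$ sit with respect to the partition $\{S,V(G)\setminus S\}$.

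If $\{u,v\}\subseteq S$ or $\{u,v\}\subseteq V(G)\setminus S$, then $e$ does not cross the partition, and $M$ remains a monochromatic $xy$-cut in $G\cup e$. Otherwise $e$ crosses the partition, which means $M$ is also a monochromatic $uv$-cut in $G$; by the hypothesis $C_\Gamma(u,v)=\{t\}$, this forces $c=t$, so $e$ carries the same color as $M$ and $M\cup\{e\}$ is a monochromatic $xy$-cut of color $t$ in $G\cup e$. Either way, the required cut exists, so $\Gamma'$ is an $MD$-coloring of $G\cup e$.

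The only conceptually nontrivial point is this last observation: any monochromatic cut of $G$ that happens to separate $u$ from $v$ must use color $t$, precisely because $C_\Gamma(u,v)=\{t\}$. That is what lets the new edge be absorbed into an existing monochromatic cut without destroying monochromaticity. I do not anticipate any other obstacle; the rest is routine case-checking together with the appeal to Lemma \ref{S-mono-1}(3) for the reverse inequality.
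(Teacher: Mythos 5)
Your proof is correct and is essentially the paper's argument: both rest on the single observation that a monochromatic cut of a color $c\neq t$ cannot be crossed by the new edge $e=uv$, since otherwise color $c$ would separate $u$ and $v$ in $G$, contradicting $C_\Gamma(u,v)=\{t\}$ --- the paper merely phrases this contrapositively, in terms of the components of the graphs $H_j$ obtained by deleting all color-$j$ edges, while you argue forward pair by pair. One small imprecision: with $S$ the component of $G-M$ containing $x$, the cut $M$ need not \emph{equal} the set of edges between $S$ and $V(G)\setminus S$ (it only contains it), but replacing $M$ by that subset, which is still a monochromatic $xy$-cut of color $c$, makes your case analysis go through verbatim.
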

\begin{proof}
Let $H_i$ be the graph obtained from $G$ by deleting all the edges with color $i$.
Let $G'=G\cup e$.
If $\Gamma'$ is not an $MD$-coloring of $G'$,
then there are two vertices $x,y$ of $G'$
such that $C_{\Gamma'}(x,y)=\emptyset$.
If $t\in C_\Gamma(x,y)$, since $x,y$ are in different components of $H_t$,
we have $t\in C_{\Gamma'}(x,y)$, a contradiction.
If $t\notin C_\Gamma(x,y)$,
then let $j\in C_\Gamma(x,y)$.
Then there are two components $D_1,D_2$ of $H_j$
such that $x\in V(D_1)$ and  $y\in V(D_2)$.
Since $j$ does not separate $x,y$ in $G'$,
the edge $e$ connects $D_1$ and $D_2$, say $u\in V(D_1)$ and $v\in V(D_2)$.
Thus, the color $j$ separates $u,v$ in $G$,
which contradicts that $C_\Gamma(u,v)=\{t\}$.
Therefore, $\Gamma'$ is an $MD$-coloring of $G'$.
Since $|\Gamma'(G')|=|\Gamma(G)|$ and
$\Gamma$ is an extremal $MD$-coloring of $G$, we have $md(G')\geq md(G)$.
Since $G$ is a connected spanning subgraph of $G'$,
by Lemma \ref{S-mono-1} (3) we have $md(G)\geq md(G')$.
So, $md(G)=md(G')$.
\end{proof}

Suppose $\Gamma$ is an $MD$-coloring of $G$ and $G_i$
is the subgraph of $G$ induced by the set of edges with color $i$,
which, in what follows, is called the {\em color $i$ induced subgraph} of $G$.
Then for any component $D_1$ of $G_i$ and any component $D_2$ of $G_j$,
we have $|V(D_1)\cap V(D_2)|\leq1$; otherwise, suppose $u,v\in V(D_1)\cap V(D_2)$.
Then $C_\Gamma(u,v)=\emptyset$, a contradiction. We use $\mathcal{H}_\Gamma$ to denote a
hyperedge-colored hypergraph with vertex-set $V(G)$ and hyperedge-set $\{V(D) \ | \ D\mbox{ is a component of some }G_i\}$,
and the hyperedge $F$ has color $i$ if $F$ corresponds to a component of $G_i$.
Let $H_\Gamma$ be a graph with $V(H_\Gamma)=V(G)$ and
$$E(H_\Gamma)=\{uv \ | \ u,v\mbox{ are in the same component of some }G_i\}.$$
Then each hyperedge of $\mathcal{H}_\Gamma$ corresponds to a clique of $H_\Gamma$,
and any two hyperedges of $\mathcal{H}_\Gamma$ (any two cliques of $H_\Gamma$)
share at most one vertex. Thus, $\mathcal{H}_\Gamma$ is a linear hypergraph.
If $F$ is a hyperedge of $\mathcal{H}_\Gamma$ and $u,v\in F$, then
$c_\Gamma(u,v)=1$. According to Lemma \ref{super-conn}, we have the following result.
\begin{lemma}
If $\Gamma$ is an extremal $MD$-coloring of $G$, then $md(G)=md(H_\Gamma)$.
\end{lemma}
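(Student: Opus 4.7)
The plan is to prove the two inequalities $md(H_\Gamma)\le md(G)$ and $md(H_\Gamma)\ge md(G)$ separately. The first is the quick one. Every edge $e\in E(G)$ has some color $i$ under $\Gamma$, so its two endpoints lie in a common component of $G_i$, and by the definition of $H_\Gamma$ this places $e$ in $E(H_\Gamma)$. Thus $G$ is a connected spanning subgraph of $H_\Gamma$, and Lemma \ref{S-mono-1}(3) yields $md(H_\Gamma)\le md(G)$.

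For the reverse inequality, my plan is to build $H_\Gamma$ from $G$ by inserting the edges of $E(H_\Gamma)\setminus E(G)$ one at a time, applying Lemma \ref{super-conn} at each step so as to preserve both the $MD$-number and the color palette. Fix an ordering $e_1,\dots,e_N$ of these edges and set $G_0:=G$, $\Gamma_0:=\Gamma$. For the inductive step, suppose I have an extremal $MD$-coloring $\Gamma_{k-1}$ of $G_{k-1}\subseteq H_\Gamma$ using the palette of $\Gamma$. Write $e_k=uv$ and assume, as the inductive claim discussed below, that $u,v$ share a hyperedge of $\mathcal{H}_{\Gamma_{k-1}}$. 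The observation preceding the lemma then forces $c_{\Gamma_{k-1}}(u,v)=1$, so Lemma \ref{super-conn} lets me adjoin $e_k$ with the unique separating color to obtain $(G_k,\Gamma_k)$, extremal, with $md(G_k)=md(G_{k-1})=md(G)$ and the same palette. After $N$ steps I arrive at $(H_\Gamma,\Gamma_N)$, an $MD$-coloring of $H_\Gamma$ using $md(G)$ colors, which gives $md(H_\Gamma)\ge md(G)$.

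The main obstacle is the inductive claim that $u,v$ share a hyperedge of $\mathcal{H}_{\Gamma_{k-1}}$. I would prove it by induction on $k$, analyzing the change from $\mathcal{H}_{\Gamma_{j-1}}$ to $\mathcal{H}_{\Gamma_j}$ when the edge $e_j$ is inserted with its unique separating color $t_j$. For $s\neq t_j$ the color-$s$ induced subgraph is unchanged, so its components (the color-$s$ hyperedges) are unchanged; for $s=t_j$ the new edge merges the two color-$t_j$ components containing the endpoints of $e_j$ into a single larger one. Hence every hyperedge of $\mathcal{H}_{\Gamma_{j-1}}$ is either preserved verbatim or absorbed into a larger hyperedge of $\mathcal{H}_{\Gamma_j}$, so any pair of vertices already contained in some hyperedge of $\mathcal{H}_{\Gamma_{j-1}}$ is still contained in one of $\mathcal{H}_{\Gamma_j}$. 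Starting from the base case $\mathcal{H}_{\Gamma_0}=\mathcal{H}_\Gamma$ together with the fact that $u,v$ share a hyperedge of $\mathcal{H}_\Gamma$ because $uv\in E(H_\Gamma)$, the inductive hypothesis propagates through all insertions and the argument closes.
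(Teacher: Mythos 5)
Your proof is correct and takes essentially the same route as the paper, which establishes this lemma simply by citing Lemma \ref{super-conn}; you merely spell out the edge-by-edge iteration and the check that each inserted edge has a unique separating color. (A small simplification: since the endpoints of each $e_k$ already lie in a single component of the color-$t_k$ induced subgraph, nothing ever merges and $\mathcal{H}_{\Gamma_k}=\mathcal{H}_{\Gamma}$ at every step, which makes your inductive claim immediate.)
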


Suppose $\Gamma$ is an $MD$-coloring of $G$ and $\mathcal{C}$ is a
hyper $k$-cycle of $\mathcal{H}_\Gamma$. Then there is a $k$-cycle $C$
of $H_\Gamma$ such that any adjacent edges of $C$ have different colors.
Thus, $t\neq 3,5$. Moreover, if $k=4$, then the opposite hyperedges of $\mathcal{C}$ have the same color.

\section{Graphs with diameter two}

In this section, we show that $md(G)\leq 2$ for a $2$-connected graph $G$ if $diam(G)\leq 2$.
However, for any integer $d\geq 3$, we can construct a $2$-connected graph $G$ such that $diam(G)=d$ and $md(G)$ can
be arbitrarily large. Thus, it makes sense to focus on studying the graphs with diameter two, since graphs with diameter 1 are complete
graphs and their $MD$-numbers are 1.

\begin{theorem}\label{S-mono-diam2}
Suppose $G$ is a graph with $diam(G)=2$. Then
\begin{enumerate}
\item if $G$ has a cut-vertex, then
$md(G)$ is equal to the number of blocks of $G$;
\item if $G$ is a $2$-connected graph, then $md(G)\leq 2$;
\item if any two nonadjacent vertices of $G$
  has at least two common neighbors, then $md(G)\leq 2$, and the
  equality holds if and only if $G=K_s\Box K_t$, where $s,t\geq 2$.
\end{enumerate}
\end{theorem}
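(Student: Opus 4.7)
For Part 1, I would first observe that any cut-vertex $v$ of $G$ must be dominating: if $u$ and $w$ lie in distinct components of $G-v$, every $u$-$w$ path must cross $v$, forcing $2 \geq d(u,w) \geq d(u,v)+d(v,w)$ and hence $d(u,v)=d(v,w)=1$. Consequently $v$ is the unique cut-vertex and each block of $G$ has the form $H_i=G[V(C_i)\cup\{v\}]$, one per component $C_i$ of $G-v$; each $H_i$ is 2-connected because $v$ dominates $H_i$, so removing any non-$v$ vertex still leaves the graph connected via $v$, and removing $v$ itself leaves $G[C_i]$, connected by definition. By Lemma~\ref{S-mono-1}(1) it remains to show $md(H_i)=1$. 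For this I invoke the observation that every triangle is monochromatic: for each edge $xy\in E(H_i-v)$ the triangle $vxy$ forces $\Gamma(xy)=\Gamma(vx)=\Gamma(vy)$, and since $H_i-v=G[C_i]$ is connected, this propagates a single color to every edge of $H_i$. Thus $md(G)$ equals the number of blocks.

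For Part 2, I would argue by contradiction. Suppose $\Gamma$ is an $MD$-coloring of the 2-connected diameter-2 graph $G$ using at least three colors, and pick a vertex $u$ incident to three edges $ux_1,ux_2,ux_3$ of distinct colors $c_1,c_2,c_3$. The triangle observation immediately makes $\{x_i,x_j\}$ a non-edge, and 2-connectedness forces each pair $(x_i,x_j)$ to have a common neighbor $y_{ij}\neq u$ (otherwise $u$ would be a cut-vertex). In the linear hypergraph $\mathcal{H}_\Gamma$ the color-hyperedges $D_1,D_2,D_3$ through $u$ pairwise intersect only in $\{u\}$. Tracing the colors of the edges $y_{ij}x_i,y_{ij}x_j$ and chasing how $y_{ij}$ sits inside $D_1,D_2,D_3$ (using linearity of $\mathcal{H}_\Gamma$) should produce either a hyper 3-cycle, which is forbidden in $\mathcal{H}_\Gamma$, or a pair of vertices trapped in two different color-hyperedges, violating linearity. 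The main obstacle is the careful case analysis on which of $y_{ij}x_i,y_{ij}x_j$ carry the colors $c_i,c_j$ or something else, and on how the three $y_{ij}$ relate to one another.

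For Part 3, the hypothesis rules out cut-vertices: a cut-vertex $v$ would be dominating by the analysis of Part 1, and any pair $x\in C_1,y\in C_2$ on opposite sides of $v$ would have $v$ as their unique common neighbor, violating the "$\geq 2$" hypothesis. Hence $G$ is 2-connected and Part 2 immediately gives $md(G)\leq 2$. For the equality characterization, Theorem~\ref{S-mono-Box} gives $md(K_s\Box K_t)=md(K_s)+md(K_t)=2$ for the "if" direction. For "only if", take a 2-colored $MD$-coloring $\Gamma$ of $G$. The color-degree-at-most-$2$ conclusion together with 2-connectedness forces each vertex to lie in exactly one red hyperedge and one blue hyperedge of $\mathcal{H}_\Gamma$, yielding two partitions of $V(G)$ into cliques. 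For every non-adjacent pair $(x,y)$ the two common neighbors $z_1,z_2$ guaranteed by the hypothesis land one in each "other" partition class, forcing every red hyperedge and every blue hyperedge to meet in exactly one vertex. This grid structure yields a bijection $V(G)\cong[s]\times[t]$ that carries $E(G)$ to $E(K_s\Box K_t)$. The most delicate step is establishing uniform sizes of the two partition classes, which I expect to extract from a double-counting based on the common-neighbor hypothesis.
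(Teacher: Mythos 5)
Your Part 1 is correct and is essentially the paper's argument (the paper shortcuts your triangle-propagation by quoting Lemma \ref{S-mono-1}(4), that $md(v\vee H)=1$ for connected $H$). The genuine gaps are in Part 2. First, an $MD$-coloring with at least three colors need not contain a vertex of color-degree at least $3$, so you cannot simply ``pick'' $u$ with $ux_1,ux_2,ux_3$ of distinct colors; the paper must, and does, treat separately the case where every vertex $z$ has $d^c(z)\le 2$, finding a rainbow hyper $3$-path by walking along $\mathcal{H}_\Gamma$ rather than fanning out from one vertex. Second, your claim that $2$-connectedness forces a common neighbor $y_{ij}\neq u$ of $x_i,x_j$ is false: $u$ being the \emph{unique} common neighbor of two vertices at distance two does not make $u$ a cut-vertex, since $x_i$ and $x_j$ may be joined by longer paths avoiding $u$. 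Third, and most seriously, the configuration you set up --- three hyperedges $D_1,D_2,D_3$ pairwise meeting only in $\{u\}$ --- is a star, not a hyper $3$-path, and by itself yields no contradiction: the pair $x_i,x_j$ is legitimately separated by either color via the hyper $2$-path $D_i\cup D_j$ through $u$. The missing idea is the paper's step off $u$: since $G$ is $2$-connected, some $w\in V(D_1)-\{u\}$ has $d^c(w)\ge 2$ (otherwise $u$ genuinely is a cut-vertex), giving a fourth hyperedge $F\ni w$ and hence a rainbow hyper $3$-path $F\cup D_1\cup D_2$ or $F\cup D_1\cup D_3$ whose end-hyperedges are disjoint; only then does $diam(G)=2$ supply a hyperpath of size at most two between interior vertices of the two ends, closing a hyper $3$-, $4$- or $5$-cycle in $\mathcal{H}_\Gamma$, all of which are forbidden. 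Your ``chasing'' paragraph explicitly defers this case analysis, so Part 2 as written is not a proof.

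In Part 3 your outline matches the paper, but the two steps you defer are exactly the content. Ruling out $d^c(v)=1$ is done by taking $u$ outside the color-$1$ component $D$ containing $v$ and two common neighbors $a,b\in N(u,v)\subseteq V(D)$: in the $4$-cycle on $v,a,u,b$ opposite edges share a color, and $\Gamma(va)=\Gamma(vb)=1$ forces the whole cycle monochromatic, putting $u$ in $D$, a contradiction. The grid structure then comes not from a double-count of class sizes but from the pointwise estimate $2\le |N(v,u')|\le |V(D_v^1)\cap V(D_{u'}^2)|+|V(D_{u'}^1)\cap V(D_v^2)|\le 2$ for a suitable nonadjacent pair $v,u'$, which simultaneously shows that the relevant hyperedges meet in exactly one vertex and that this intersection vertex is adjacent to $v$. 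You should carry these computations out rather than appeal to an unspecified double-counting argument.
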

\begin{proof}
The proof of statement (1) goes as follows. If $v$ is a cut-vertex of $G$ and $diam(G)=2$,
then $v$ connects every vertex of $V(G-v)$. Thus, for each block $D$ of $G$,
$D-v$ is connected and $D=(D-v)\vee v$, i.e., $md(D)=1$.
Therefore, $md(G)$ is equal to the number of blocks of $G$.

Next, for the proof of statement (2) suppose $\Gamma$ is an $MD$-coloring of $G$ with
$|\Gamma(G)|\geq 3$. Then each hypercycle (hyperpath) of the above mentioned hypergraph
$\mathcal{H}_\Gamma$ is a linear hypercycle (linear hyperedge).
We now prove that there is a rainbow hyper $3$-path (the colors of the three hyperedges are pairwise differently)
in $\mathcal{H}_\Gamma$. Since $\mathcal{H}_\Gamma$ does not have hyper $3$-cycle,
the union of three consecutive hyperedges forms a hyper $3$-path.
If every vertex $z$ of $G$ has $d^c(z)\leq 2$, then there is a rainbow hyper $3$-path in $\mathcal{H}_\Gamma$.
If there is a vertex $x$ of $G$ with $d^c(x)\geq 3$,
then there are three hyperedges, say $D_1,D_2$ and $D_3$, such that $x$ is the common vertex of them.
Then the colors of $D_1,D_2$ and $D_3$ are pairwise differently.
Since $G$ is a $2$-connected graph, there is a vertex $w$ of $V(D_1)-\{x\}$ with $d^c(w)\geq 2$
(otherwise, $x$ is a cut-vertex of $G$, a contradiction).
Then there is a hyperedge $F$ of $G$, such that $w$ is a common vertex of $F$ and $D_1$.
Thus, either $F\cup D_1\cup D_2$ or $F\cup D_1\cup D_3$ is a rainbow hyper $3$-path.

Let $\mathcal{P}$ be a rainbow hyper $3$-path of $\mathcal{H}$ and let
$V(D_i)\cap V(D_{i+1})=\{u_i\}$ for $i\in[2]$.
Let $u\in V(D_1)-\{u_1\}$ and $v\in V(D_3)-\{u_2\}$.
We use $\mathcal{P}_{u,v}$ to denote a minimum hyperpath connecting $u$ and $v$.
Since $diam(G)=2$, the size of $\mathcal{P}_{u,v}$ is either one or two.
Let $\mathcal{C}=\mathcal{P}_{u,v}\cup \mathcal{P}$.
If $\mathcal{P}_{u,v}$ is a hyperedge, then $\mathcal{C}$ is a hyper $4$-cycle.
Since $D_1$ and $D_3$ are opposite hyperedges of $\mathcal{C}$ and they have different colors, a contradiction.
If $\mathcal{P}_{u,v}$ is a hyper $2$-path, then let $F_1,F_2$ be hyperedges of $\mathcal{P}_{u,v}$, and let $V(F_1)\cap V(F_2)=\{u_3\}$.
If $u_3\notin \{u_1,u_2\}$, then $\mathcal{C}$ is a hyper $5$-cycle, a contradiction.
If $u_3\in \{u_1,u_2\}$, then $\mathcal{C}$ contains a hyper $3$-cycle, a contradiction.

Finally, we show statement (3). It is obvious that $diam(G)\leq2$, and $G$ is a $2$-connected graph when $n\geq 3$.
So, $md(G)\leq 2$. Suppose $G=K_s\Box K_t$ and $s,t\geq 2$. Then $|N(u,v)|=2$ for any nonadjacent vertices $u$ and $v$ of $G$.
By Lemma \ref{S-mono-1} (2) and Theorem \ref{S-mono-Box}, we have $md(G)=md(K_s)+md(K_t)=2$.

Suppose $md(G)=2$. Then $n\geq 3$ and $G$ is a $2$-connected graph. Let $\Gamma$ be an extremal $MD$-coloring of $G$
and let $G_1,G_2$ be the colors $1, 2$ induced subgraphs of $G$, respectively.
Since $md(G)=2$, we have $d^c(v)\leq 2$ for each $v\in V(G)$.
If $d^c(v)=1$, by symmetry, suppose $v$ is in a component $D$ of $G_1$.
Since $md(G)=2$, we have $D\neq G$, i.e., there exists a vertex $u$ in $V(G)-V(D)$.
Then $u,v$ are nonadjacent and $N(u,v)\subseteq D$. Let $\{a,b\}\subseteq N(u,v)$.
Since $\Gamma(va)=\Gamma(vb)=1$, we have $va\cup vb\cup ua\cup ub$
is a monochromatic $4$-cycle, i.e., $u\in V(D)$, a contradiction.
Thus, $d^c(v)=2$ for each $v\in V(G)$. We use $D_u^1$ and $D_u^2$ to denote the components of $G_1$ and $G_2$,
respectively, such that $V(D_u^1)\cup V(D_u^2)=u$.

Suppose there are $t$ components of $G_1$ and $s$ components of $G_2$.
Since $G$ is a $2$-connected graph, we have $s,t\geq 2$.
Otherwise, if $s=1$, then for each vertex $v$ of $G_1$, $v$ is a cut-vertex, a contradiction.
We label the $t$ components of $G_1$ by the numbers in $[t]$ and label the $s$ components of $G_2$ by the numbers in $[s]$, respectively.
We use $l_1(D)$ to denote the label of a component $D$ of $G_1$,
and use $l_2(F)$ to denote the label of a component $F$ of $G_2$.
For a vertex $u$ of $G$, since $d^c(u)=2$, we use $(l_1(D_u^1),l_2(D_u^2))$ to denote $u$.
For two vertices $u,v$ of $G$, let $u=(i,j)$ and let $v=(s,t)$.
In order to show $G=K_s\Box K_t$, we need to show that $uv$ is an edge of $G$ when $i=s$ and $j\neq t$,
or $i\neq s$ and $j=t$, and $u,v$ are nonadjacent vertices when $i\neq s$ and $j\neq t$.
If $i\neq s$ and $j\neq t$, then $v\notin V(D_u^1\cup D_u^2)$.
Since $N(u)\subseteq V(D_u^1\cup D_u^2)$, $u,v$ are nonadjacent vertices of $G$.
If, by symmetry, $i=s$ and $j\neq t$, then $D_u^1=D_v^1$.
Let $u'\in V(D_u^2)-\{u\}$. Then $u',v$ are nonadjacent.
Since $N(v)\subseteq V(D_v^1\cup D_v^2)$ and $N(u')\subseteq V(D_{u'}^1\cup D_{u'}^2)$, we have
$$2\leq |N(v,u')|\leq|V(D_v^1\cup D_v^2)\cap V(D_{u'}^1\cup D_{u'}^2)|=
|D_v^1\cap D_{u'}^2|+|D_{u'}^1\cap D_v^2|\leq 2.$$
Thus, $D_v^1\cap D_{u'}^2\subseteq N(v,u')$.
Since $D_v^1\cap D_{u'}^2=\{u\}$, we have $uv$ is an edge of $G$.
\end{proof}

\begin{remark} Suppose $L_1,\cdots, L_r$ are $r \ (\geq 2)$ internal disjoint odd paths with
an order $2k_i+2$ for each $i\in [r]$, and they have the same ends $\{u,v\}$.
Let $L_i=ue_1^ix_1^ie_2^ix_2^i\cdots x_{2k_i}^i$ $e_{2k_i+1}v$.
Let $c_0=1$ and $c_i=\Sigma_{j=0}^ik_j$. If $k_i\geq 1$ for $i\in[r]$,
then let $\Gamma$ be an edge-coloring of $G$ such that
$\Gamma(e_j^i)=\Gamma(e_{2k_i+2-j}^i)=c_{i-1}+j$
and $\Gamma(e^i_{k_i+1})=1$ for each $i\in[r]$ and $j\in [k_i]$.
Then $\Gamma$ is an $MD$-coloring of $G$ with $|\Gamma(G)|=\frac{|G|}{2}$.
Since $G$ is a $2$-connected graph, we have $md(G)=\frac{|G|}{2}$.
If $k_i=1$ for each $i\in[r]$, then $G$ is a $2$-connected graph with $diam(G)=3$
and $md(G)=r$. Therefore, there exist $2$-connected graphs with diameter three,
but their $MD$-numbers can be arbitrarily large.
\end{remark}

Let $A_n$ be a graph with $V(A_n)=\{v_1,\cdots,v_{\left\lceil\frac{n}{2}\right\rceil}\}\cup \{u_1,\cdots,u_{\left\lfloor\frac{n}{2}\right\rfloor}\}$
and
$E(A_n)=\{v_iv_j:i,j\in[\left\lceil\frac{n}{2}\right\rceil]\}\cup
\{u_iu_j:i,j\in [\left\lfloor\frac{n}{2}\right\rfloor]\}\cup
\{v_iu_i:i\in [\left\lfloor\frac{n}{2}\right\rfloor]\}$.
Then $\{v_iu_i:i\in [\left\lfloor\frac{n}{2}\right\rfloor]\}$
is a matching-cut of $G$. If $n$ is an odd integer, then let
$$\mathcal{A}_n=\{A_n-E \ | \ E\mbox{ is either an emptyset or a matching of }
 A_n[\{v_1, \cdots,v_{\frac{n-1}{2}}\}]\}.$$

\begin{theorem}\label{S-mono-n2}
Suppose $G$ is a $\left\lfloor\frac{n}{2}\right\rfloor$-connected graph
and $n\geq 4$. Then $md(G)\leq 2$ and
\begin{enumerate}
\item if $n$ is even, then $md(G)=2$ if and only if $G= A_n$;
\item if $n$ is odd, then $md(G)=2$ if and only if $G\in \mathcal{A}_n$.
\end{enumerate}
\end{theorem}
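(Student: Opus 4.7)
The plan is to first establish $md(G)\leq 2$ using the minimum-degree consequence of $\lfloor n/2\rfloor$-connectivity, and then to handle the equality case separately for $n$ even and $n$ odd. Since $\delta(G)\geq \lfloor n/2\rfloor$, any two nonadjacent vertices $u,v$ satisfy $|N(u,v)|\geq 2\lfloor n/2\rfloor-(n-2)$, which equals $2$ for even $n$ and $1$ for odd $n$. For even $n$ this feeds directly into Theorem \ref{S-mono-diam2}(3) (which gives both the bound and the equality characterization), while for odd $n$ the diameter is still at most $2$ and Theorem \ref{S-mono-diam2}(2) gives $md(G)\leq 2$.

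For the even case, Theorem \ref{S-mono-diam2}(3) already says that $md(G)=2$ if and only if $G=K_s\Box K_t$ with $s,t\geq 2$. I would then invoke the standard formula $\kappa(K_s\Box K_t)=s+t-2$: the hypothesis $\kappa(G)\geq n/2=st/2$ becomes $(s-2)(t-2)\leq 0$, forcing $s=2$ or $t=2$. Together with $st=n$ this gives $G=K_{n/2}\Box K_2$, which matches the definition of $A_n$.

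For the odd case, write $n=2k+1$. The ``if'' direction is a direct construction: on $G\in\mathcal{A}_n$ color the edges inside each of the two cliques on $\{v_1,\ldots,v_{k+1}\}$ and $\{u_1,\ldots,u_k\}$ with color $1$ and the matching edges $v_iu_i$ with color $2$; a routine check verifies that every pair of vertices is separated by some color, and a short degree computation gives $\kappa(G)=k$. For the ``only if'' direction I fix an extremal $MD$-coloring $\Gamma$ and denote by $V_1,\ldots,V_s$ and $U_1,\ldots,U_t$ the color-$1$ and color-$2$ components. The $MD$-property forces the pair labeling $v\mapsto(V_{i(v)},U_{j(v)})$ to be injective, all edges inside a row to be color $1$ and all edges inside a column to be color $2$, and no edge to join two vertices that lie in different rows and different columns. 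Combined with $\delta(G)\geq k$ this yields the cell inequality $|V_{i(v)}|+|U_{j(v)}|\geq k+2$. Granted $s=2$, the identity $a_1+a_2=2k+1$ plus the cell inequality pin down $a_1=k+1$, $a_2=k$, one lone column, and $k$ paired columns; a vertexwise degree count then forces row $2$ to induce $K_k$ and row $1$ to be $K_{k+1}$ minus a matching among the $k$ vertices lying in paired columns, exactly realizing $\mathcal{A}_n$.

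The step I expect to be the main obstacle is the reduction to $s=2$. The cell inequality alone only yields $s(k+2-s)\leq 2k+1$, and for $k=4$ this permits the balanced configuration $s=t=3$ with all rows and columns of size $3$; that configuration is realized by $K_3\Box K_3$, which has $n=9$, $md(K_3\Box K_3)=2$, and $\kappa(K_3\Box K_3)=4=\lfloor n/2\rfloor$. So the minimum-degree corollary of connectivity is not by itself enough to eliminate $s\geq 3$, and one must exploit the $k$-connectivity hypothesis in a genuinely vertex-cut fashion --- for example, by bounding the vertex boundary of a single row $V_i$ by $\sum_{v\in V_i}(b_{j(v)}-1)$ and invoking Menger --- possibly together with a separate argument for borderline configurations such as $K_3\Box K_3$.
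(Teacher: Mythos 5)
Your overall route is the same as the paper's: the degree bound coming from $\lfloor\frac{n}{2}\rfloor$-connectivity gives $diam(G)\leq 2$ and hence $md(G)\leq 2$ via Theorem \ref{S-mono-diam2}(2); the even case is settled by Theorem \ref{S-mono-diam2}(3) together with the arithmetic $s+t-2\geq \frac{n}{2}$, $st=n$, forcing $\min\{s,t\}=2$; and the odd case is attacked through the two color-induced subgraphs of an extremal $MD$-coloring, with your ``lone column'' configuration corresponding to the paper's subcase in which some vertex has color-degree one (which does yield the $\mathcal{A}_n$ structure essentially as you sketch).

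The obstacle you flag is not an artifact of your write-up: it is exactly the point at which the paper's own proof breaks. In the paper's Subcase 2.1 (every vertex has color-degree two) one obtains $n=st$ with $s,t\geq 3$ and, from the vertex-cut $D_1\cup D_2-\{x\}$, the inequality $s+t-2\geq k$ where $n=2k+1$; these combine to $(s-2)(t-2)\leq 1$, so the only surviving configuration is $s=t=3$, $n=9$. The paper dismisses it by asserting ``$s,t>3$,'' which was never established (only $s,t>2$ was proved). Moreover the surviving configuration is realizable: $K_3\Box K_3$ is $4$-regular and $4=\lfloor\frac{9}{2}\rfloor$-connected, has $md(K_3\Box K_3)=md(K_3)+md(K_3)=2$ by Theorem \ref{S-mono-Box}, yet is $K_4$-free and therefore lies outside $\mathcal{A}_9$, since every member of $\mathcal{A}_9$ retains the intact clique on $u_1,\ldots,u_4$. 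So statement (2) is false for $n=9$ as written; no refinement of the vertex-cut argument can close your gap, and the statement itself must be amended (for instance by adjoining $K_3\Box K_3$ to $\mathcal{A}_9$, or by excluding $n=9$). For odd $n\neq 9$ the same arithmetic does produce a genuine contradiction in Subcase 2.1, and the color-degree-one analysis then recovers $\mathcal{A}_n$, so both your plan and the paper's proof are sound away from this single exceptional order.
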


\begin{proof} Since $N(x)+N(y)\geq n-1$ for any two nonadjacent vertices $x$ and $y$,
we have $diam(G)\leq 2$. So, $md(G)\leq 2$.

It is obvious that $G$ is a $\left\lfloor\frac{n}{2}\right\rfloor$-connected
graph if $G=A_n$ or $G\in \mathcal{A}_n$. Moreover, by Lemma \ref{S-mono-matching} and
Theorem \ref{S-mono-diam2}, we have $md(G)=2$.

Now suppose $G$ is a $\left\lfloor\frac{n}{2}\right\rfloor$-connected graph
and $md(G)=2$. Since $n\geq 4$, $G$ is a $2$-connected graph. We distinguish
the following cases for our proof.

{\bf Case 1.} $n$ is even.

For any two nonadjacent vertices $u,v$ of $G$, $|N(u)\cap N(v)|\geq 2$.
By Theorem \ref{S-mono-diam2} (3), $G=K_s\Box K_t$, where $s,t\geq 2$.
We need to prove that at least one of $s,t$ equals two.
Suppose $H_1,H_2$ are two cliques of order $s,t$, respectively, and
$V(H_1)\cap V(H_2)=\{u\}$. Then $N(u)\subseteq V(H_1\cup H_2)$, i.e., $s+t-2\geq \frac{n}{2}$.
Since $n=st$, we have $t(s-2)\leq 2(s-2)$.
Thus, either $s=2$ or $t=2$.

{\bf Case 2.} $n$ is odd.

Say $n=2k+1$ for some integer $k$. Suppose $\Gamma$ is an extremal $MD$-coloring of $G$
and $G_1,G_2$ are the colors $1,2$ induced subgraphs, respectively.

{\em Subcase 2.1} Every vertex $v$ of $G$ has $d^c(v)=2$.

Suppose there are components $D, F$ of $G_1,G_2$,
respectively, such that $V(G)\cap V(F)=\emptyset$.
Then let $u\in V(D)$ and $v\in V(F)$.
Since $d^c(u)=d^c(v)=2$, there are components $D'$ of $G_1$ and $F'$ of $G_2$,
such that $V(D)\cap V(F')=\{u\}$ and $V(F)\cap V(D')=\{v\}$.
Since $V(D)\cup V(F')-\{u\}$ and $V(D')\cup V(F)-\{v\}$ are vertex-cuts of $G$,
we have $|V(D)\cup V(F')|\geq k+1$ and $|V(D')\cup V(F)|\geq k+1$.
Since $|V(D')\cap V(F')|\leq 1$, we have
$n\geq |V(D)\cup V(F')|+|V(D')\cup V(F)|-|V(D')\cap V(F')|\geq 2k+1=n$,
i.e., $D\cup D'\cup F\cup F'=G$.
Then $u$ is a cut-vertex of $G$, a contradiction.
Therefore, for each component $D$ of $G_1$ and each component $F$ of $G_2$,
we have $|V(G)\cap V(F)|=1$.
Then since $d^c(v)=2$ for each $v\in V(G)$, any two components of $G_1$ (and also $G_2$)
have the same order, say $s$ (the order is $t$).
Then $s,t>2$; otherwise, suppose $s=2$, i.e., $G_1$ is a matching.
Since $n$ is odd, we have $V(G)-V(G_1)\neq \emptyset$.
Thus, each vertex $v$ of $V(G)-V(G_1)$ has $d^c(v)=1$, a contradiction.
For a vertex $x$ of $G$, let $D_1,D_2$ be the components of $G_1, G_2$, respectively,
containing $x$. Then $D_1\cup D_2-\{x\}$ is a vertex-cut of $G$, i.e., $s+t-2\geq k$.
However, $2k+1=n=st$ and $s,t>3$, a contradiction.

{\em Subcase 2.2} There is a vertex $v$ of $G$ with $d^c(v)=1$.

Suppose $D$ is the component of $G_1$ containing $v$.
Then since $D-\{v\}$ is a vertex cut of $G$, we have $|D|\geq k+1$.
Since the set of vertices of $D$ with color-degree two is a vertex-cut of $G$,
there are at least $k$ vertices of $D$,
say $v_1,\cdots,v_k$, such that $d^c(v_i)=2$ for $i\in[k]$.
Let $F_i$ be the component of $G_2$ containing $v_i$
and let $U=\bigcup_{i\in[k]}(V(F_i)-\{v_i\})$.
Then $|U|\geq k$. Since $n\geq |D|+|U|\geq 2k+1=n$,
we have $|D|=k+1$, $|U|=k$, and $|F_i|=2$ for $i\in[k]$.
Moreover, $N(v)=\{v_1,\cdots,v_k\}$.
Let $V(F_i)-\{v_i\}=\{u_i\}$. For $i,j\in[k]$, if $u_iu_j$ is not an edge of $G$,
then $U-\{u_i,u_j\}+v_j$ is a vertex-cut of $G$ with order $k-1$,
which contradicts that $G$ is $k$-connected.
For each $v_i$, if there are two vertices $v_j,v_l$
such that $v_iv_j$ and $v_iv_l$ are not edges of $G$, then
$V(D)-\{v_i,v_j,v_l\}+u_i$ is a vertex-cut of $G$ with order $k-1$,
which contradicts that $G$ is $k$-connected.
Therefore, $v_i$ connects all but at most one vertex of $D-v$.
So, $G\in \mathcal{A}_n$.
\end{proof}

\section{Upper bounds}

In this section, we give two upper bounds of the monochromatic disconnection number of a graph $G$, one of which depends on the connectivity
of $G$, and the other depends on the independent number of $G$. Note that for a $k$-connected graph $G$, when $k=2$ (small) and
$k\geq \left\lfloor\frac{n}{2}\right\rfloor$ (large), from Theorems \ref{S-2con} and \ref{S-mono-n2} we know that
$md(G)\leq \left\lfloor\frac{n}{k}\right\rfloor$. This suggests us to make the following conjecture.
\begin{conjecture}
Suppose $G$ is a $k$-connected graph.
Then $md(G)\leq \left\lfloor\frac{n}{k}\right\rfloor$.
\end{conjecture}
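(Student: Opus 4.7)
The plan is to prove the bound by induction on $n$, working with an extremal MD-coloring and the linear hypergraph $\mathcal{H}_\Gamma$ it induces. Fix an extremal MD-coloring $\Gamma$ of a $k$-connected graph $G$ with $r=md(G)$ colors, let $E_i$ be the color-$i$ edge set, and let $\Pi_i$ denote the partition of $V(G)$ into the components of $G-E_i$. The MD-property says that the common refinement $\bigwedge_i\Pi_i$ is the discrete partition, which forces $n\leq\prod_i|\Pi_i|$. The $k$-vertex-connectivity of $G$ implies $k$-edge-connectivity, so each color-$i$ edge-cut satisfies $|E_i|\geq k$; more strongly, every part $A\in\Pi_i$ has at least $k$ color-$i$ edges on its boundary, so $2|E_i|\geq k|\Pi_i|$.

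The main idea is to injectively associate to each color $i$ a set of vertices of size at least $k$, with bounded overlap across colors. Concretely, for each color $i$ I would choose a smallest part $A_i\in\Pi_i$ (hence $|A_i|\leq n/|\Pi_i|$) and $k$ color-$i$ edges leaving $A_i$, anchoring $A_i$ to a set of $k$ distinguished vertices. Using that $\mathcal{H}_\Gamma$ is linear (pairwise intersections at most one) and contains no hyper $3$- or $5$-cycle, I would argue that these distinguished sets can be chosen to be essentially disjoint across distinct colors, yielding $kr\leq n$ and hence $r\leq\lfloor n/k\rfloor$. For the inductive step, Lemma \ref{S-mono-1}(5) gives the reduction tool: if $v$ is a non-cut, non-pendant vertex such that $G-v$ is still $k$-connected, then $md(G)\leq md(G-v)\leq\lfloor(n-1)/k\rfloor\leq\lfloor n/k\rfloor$. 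The remaining case is when $G$ is minimally $k$-connected; here one invokes known structural results (e.g., that such graphs have a positive fraction of vertices of degree exactly $k$) to find a low color-degree vertex suitable for either deletion or a direct counting refinement.

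The main obstacle I anticipate is the almost-disjointness step. When $G$ is a dense $k$-connected graph that comes close to saturating the conjectural bound (for instance iterated Cartesian products of small factors, for which Theorem \ref{S-mono-Box} already produces relatively large values of $md$), the smallest parts $A_i$ for distinct colors can overlap substantially, and their $k$-edge boundaries may share endpoints. Controlling this interaction seems to require a refined accounting of $d^c(v)$ at individual vertices combined with the linearity of $\mathcal{H}_\Gamma$; it may also be necessary to adaptively choose $A_i$ (rather than always taking a smallest part) and to exploit stronger constraints on short hyper-cycles beyond merely ruling out hyper $3$- and $5$-cycles. As a sanity check, this strategy specializes to recover the $k=2$ bound of Theorem \ref{S-2con} and the $k=\lfloor n/2\rfloor$ bound of Theorem \ref{S-mono-n2}, which lends some support to the overall approach, though the uniform bound across the intermediate regime of $k$ is where the real work lies.
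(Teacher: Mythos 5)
The statement you were given is stated in the paper only as a conjecture: the authors offer no proof, observing merely that the bound holds in the two extreme regimes $k=2$ (Theorem \ref{S-2con}) and $k\geq\left\lfloor\frac{n}{2}\right\rfloor$ (Theorem \ref{S-mono-n2}) and that it would be sharp for $K_r\Box P$ by Theorem \ref{S-mono-Box}. So there is no paper proof to compare against, and your proposal has to stand on its own; as written, it does not.

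The decisive step --- attaching to each color $i$ a set of at least $k$ distinguished vertices via $k$ color-$i$ edges leaving a smallest part $A_i$ of $\Pi_i$, and then showing these sets are ``essentially disjoint'' across the $r$ colors so that $kr\leq n$ --- is exactly the content of the conjecture, and you flag it as an anticipated obstacle rather than resolve it. Two concrete difficulties: (i) the $k$ boundary edges of $A_i$ need not have $k$ distinct endpoints, so a single color need not even produce $k$ distinguished vertices; (ii) linearity of $\mathcal{H}_\Gamma$ only bounds pairwise intersections of monochromatic components by one vertex, which is far too weak to stop the distinguished sets of many colors from concentrating on a small vertex set --- a vertex $v$ with $d^c(v)=r$ lies in a component of every color class, and nothing in your argument excludes many such vertices. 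The inductive reduction is also incomplete: when no vertex deletion preserves $k$-connectivity you appeal to unspecified ``known structural results'' about minimally $k$-connected graphs, so the base case of the induction remains open. Your preliminary inequalities ($n\leq\prod_i|\Pi_i|$ from the discrete common refinement, and $2|E_i|\geq k|\Pi_i|$ from $k$-edge-connectivity) are correct and in the spirit of what the paper actually proves in this direction, namely the theorem in Section 4 bounding $md(G)$ by the constant $\frac{(1+\varepsilon)^2}{4\varepsilon^2(1-\varepsilon)}$ for $\varepsilon n$-connected graphs via the estimate $|S_i|\geq k(n-k)$ and the Favaron--Kouider--Mah\'{e}o mean-distance bound; but by themselves they do not yield $md(G)\leq\left\lfloor\frac{n}{k}\right\rfloor$, and the conjecture should be treated as open.
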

Suppose $P$ is a $k$-path. Then $md(K_r\Box P)=md(K_r)+md(P)=k+1$.
Since $n=|K_r\Box P|=r(k+1)$ and $K_r\Box P$ is an $r$-connected graph,
the bound is sharp for $k\geq 2$ if the conjecture is true.

The {\em mean distance} of a connected graph $G$ is defined as $\mu(G)={n\choose 2}^{-1}\Sigma_{u,v\in V(G)}d(u,v)$.
Plesn\'{l}k in \cite{P} posed the problem of finding sharp upper bounds on $\mu(G)$ for $k$-connected graphs.
Favaron et al. in \cite{FKM} proved that if $G$ is a $k$-connected graph of order $n$, then
\begin{align}
\mu(G)\leq \left\lfloor\frac{n+k-1}{k}\right\rfloor\cdot
\frac{n-1-\frac{k}{2}\left\lfloor\frac{n-1}{k}\right\rfloor}{n-1},
\end{align}
and the bound is sharp when $n$ is even.
If $n$ is odd and $k\geq 3$, then Dankelmann et al. in \cite{PSC} proved that
$\mu(G)\leq \frac{n}{2k+1}+30$ and
this bound is, apart from an additive constant, best possible.

The following result gives a relationship between the monochromatic disconnection
number and the connectivity of a graph, which means that if the connectivity of a graph
is in linear of the order of the graph, then the monochromatic disconnection number of the graph
is upper bounded by a constant.

\begin{theorem}
For any $0<\varepsilon<\frac{1}{2}$, there is a constant
$C=C(\varepsilon)<\frac{(1+\varepsilon)^2}{4\varepsilon^2(1-\varepsilon)}$,
such that for any $\varepsilon n$-connected graph $G$, $md(G)\leq C$.
\end{theorem}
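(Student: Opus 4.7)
The plan is to show that an extremal $MD$-coloring of an $\varepsilon n$-connected graph $G$ forces each color class to carry $\Theta(\varepsilon^2 n^2)$ edges, so the number of colors $k=md(G)$ is bounded by a constant depending only on $\varepsilon$. First, I would apply the lemma $md(G)=md(H_\Gamma)$ from Section~2 to replace $G$ by $H_\Gamma$; since $H_\Gamma$ is a spanning supergraph of $G$, its connectivity is still at least $\varepsilon n$, and the edge set of $G$ now decomposes into cliques (the hyperedges of $\mathcal{H}_\Gamma$) carrying $k$ colors with same-color cliques pairwise vertex-disjoint.

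The key step is a lower bound on $|E(G_i)|$ for each color $i$. Because $\Gamma$ is an $MD$-coloring, $G-E(G_i)$ is disconnected; let its components be $C_1^i,\ldots,C_{p_i}^i$. For any bipartition $(X,Y)$ of $V(G)$ obtained by grouping these components, every cross-edge is color $i$. Using $\delta(G)\geq\varepsilon n$, each vertex of $X$ has at most $|X|-1$ internal neighbors and thus at least $\varepsilon n-|X|+1$ neighbors in $Y$, yielding $|E(G_i)|\geq|X|(\varepsilon n-|X|+1)$ whenever $|X|\leq\varepsilon n+1$. This quadratic peaks near $|X|=(\varepsilon n+1)/2$ with value $(\varepsilon n+1)^2/4$, so by selecting the bipartition so that $|X|$ lies close to this optimum, I would obtain $|E(G_i)|\geq(\varepsilon n+1)^2/4$ up to lower-order terms. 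Summing over the $k$ colors against $\sum_i|E(G_i)|=|E(G)|$, and combining with a matching upper bound on $|E(G)|$ that contributes the $(1-\varepsilon)$ factor in the denominator (arising because $H_\Gamma$ with $k\geq 2$ colors cannot be complete, and more quantitatively because its clique structure forces linearly many non-edges to be spread out appropriately), should yield the desired $k\leq(1+\varepsilon)^2/(4\varepsilon^2(1-\varepsilon))$.

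The main obstacle is the degenerate case where the components of $G-E(G_i)$ are highly imbalanced --- for example, one dominant component of size greater than $(1-\varepsilon)n$ and the rest tiny --- so no attainable $|X|$ reaches the quadratic optimum and only the trivial $|E(G_i)|\geq\kappa\geq\varepsilon n$ holds. I would handle such ``bad'' colors by a pigeonhole argument: each forces at least one vertex of very low color-degree (in the extreme case, $d^c(v)=1$), and since $\sum_vd^c(v)=\sum_E|E|$ is controlled by the clique decomposition of $H_\Gamma$, the total number of such vertices, hence of bad colors, is at most $O(1/\varepsilon)$. The ``good'' colors carry the quadratic bound, and balancing the two regimes should reproduce the exact constant. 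Carrying out this balancing to extract the precise $(1+\varepsilon)^2/(4\varepsilon^2(1-\varepsilon))$ rather than a looser bound of order $1/\varepsilon^2$, and in particular squeezing out the exact $(1-\varepsilon)$ factor from the interplay between the cut lower bound and the total edge count, is the main technical difficulty I expect.
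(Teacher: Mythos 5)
Your overall strategy --- lower-bound the ``cost'' of each color and divide it into a global budget --- matches the paper's in spirit, but the quantity you charge each color for is the wrong one, and this creates a genuine gap. You want $|E(G_i)|\geq (\varepsilon n+1)^2/4$ for each color $i$, obtained by grouping the components of $G-E(G_i)$ into a bipartition $(X,Y)$ with $|X|$ near $\varepsilon n/2$ and invoking the minimum degree. Nothing forces such a bipartition to exist: the components of $G-E(G_i)$ can be, say, two sets of size $n/2$ each, in which case the only available choice is $|X|=n/2>\varepsilon n+1$, the per-vertex bound $\varepsilon n-|X|+1$ is negative, and the only surviving lower bound on $|E(G_i)|$ is the edge-connectivity, which is linear rather than quadratic in $n$. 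Your fallback for ``bad'' colors does not cover this case either --- a balanced two-component split forces no vertex of low color-degree (your pigeonhole is aimed at the highly imbalanced case, which the paper in fact rules out outright). And even if every color did carry $(\varepsilon n)^2/4$ edges, the budget $|E(G)|\leq\binom{n}{2}$ only yields $md(G)\leq 2/\varepsilon^2$, which for small $\varepsilon$ is about eight times the required $\frac{(1+\varepsilon)^2}{4\varepsilon^2(1-\varepsilon)}\approx\frac{1}{4\varepsilon^2}$.

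The paper avoids both problems by charging each color $i$ for the set $S_i$ of vertex \emph{pairs} it separates rather than for its edges. Using the linear-hypergraph structure of the color classes, it first shows that no component of $G-E(G_i)$ has more than $(1-\varepsilon)n$ vertices, whence $|S_i|\geq \varepsilon n\cdot(1-\varepsilon)n$ --- a bound insensitive to how the cut edges are distributed, which in particular survives the balanced two-component case. The matching global budget is $\sum_i|S_i|=\sum_{u,v}c_\Gamma(u,v)\leq\sum_{u,v}d(u,v)$, since a $u$--$v$ path must cross each monochromatic cut separating $u$ and $v$ in a distinct edge; the latter sum is then controlled by the Favaron--Kouider--Mah\'{e}o upper bound on the mean distance of $k$-connected graphs, and that external inequality is where the precise factors $(1+\varepsilon)^2$ and $(1-\varepsilon)$ come from. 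To salvage your plan you would need to replace the edge count by this pair count and import (or reprove) the mean-distance bound.
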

\begin{proof}
Suppose $\Gamma$ is an extremal $MD$-coloring of $G$ and $V(G)=\{v_1,\cdots,v_n\}$.
We use $(i,j)$ to denote an unordered integer pair in this proof.
For each color $i$ of $\Gamma(G)$, let
$$S_i=\{(j,l):\mbox{ the color }i\mbox{ sepatates }v_j\mbox{ and }v_l\}.$$
Then $\Sigma_{i\in \Gamma}|S_i|=\Sigma_{j\neq l}c_\Gamma(v_j,v_l)$.

\begin{claim}\label{S-momo-clm1}
$|S_i|\geq k(n-k)$ for each $i\in \Gamma(G)$.
\end{claim}
\begin{proof}
Let $\varepsilon n=k$. The result holds obviously for $k=1$.
Thus, let $k\geq 2$. For each $i\in \Gamma(G)$, let $G_i$ be the color $i$ induced subgraph of $G$,
and let $H_i$ be the graph obtained from $G$ by deleting all the edges with color $i$.
Then $H_i$ is a disconnected graph.
Suppose there is a component $D$ of $H_i$ with $|D|>n-k$.
Let $U=\{v_j \ | \ v_j\in V(D)\cap V(G_i)\}$.
For a component $B$ of $G_i$, if $V(B)\cap V(D)\neq \emptyset$,
then $|V(B)\cap V(D)|=1$.
Since $B$ contains at least one vertex of $V(G-D)$, we have $|U|\leq |V(G-D)|<k$.
Since $|D|>n-k=n(1-\varepsilon)>\varepsilon n=k$,
$U$ is a proper subset of $V(D)$.
So, $U$ is a vertex-cut of $G$.
Since $|U|<k$ and $G$ is $k$-connected, this yields a contradiction.
Thus, for each $i\in \Gamma(G)$,
there is no component of $H_i$ with order greater than $n-k$.

We partition the components of $H_i$ into $r$ parts such that $r$ is minimum and
the number of vertices in each part is at most $n-k$.
Suppose the $r$ parts have $n_1,\cdots,n_r$ vertices, respectively.
Then $\sum_{j\in[r]}n_j=n$.
If $r\geq 4$, then since $r$ is minimum, $n_l+n_j>n-k$ for each $l,j\in [r]$.
Thus,
$$n(r-1)=(r-1)\sum_{t\in[r]}n_t=\sum_{l,j\in[r]}(n_l+n_j)>{r\choose 2}(n-k),$$
and then $r(n-k)<2n$.
Since $k<\frac{n}{2}$, this yields a contradiction.
Therefore, $r$ is equal to 2 or 3.
If $r=2$, then $|S_i|\geq n_1\cdot n_2\geq k(n-k)$.
If $r=3$, then there is an $n_l$ such that $k\leq n_l\leq n-k$, say $l=1$.
Otherwise, $n_j<k$ for each $j\in[3]$, then $n=\sum_{j\in[3]}n_j <n$, a contradiction.
Thus, $|S_i|> n_1\cdot (n_2+ n_3)\geq n(n-k)$.
\end{proof}
By the inequality (1) above, we have
\begin{align*}
\mu(G)&\leq \left\lfloor\frac{n+k-1}{k}\right\rfloor\cdot
\frac{n-1-\frac{k}{2}\left\lfloor\frac{n-1}{k}\right\rfloor}{n-1}=
\left\lfloor\frac{n+k-1}{k}\right\rfloor\cdot
\left(1-\frac{k}{2(n-1)}\left\lfloor\frac{n-1}{k}\right\rfloor\right)\\
&\leq \left(\frac{n+k-1}{k}\right)\cdot
\left[1-\frac{k}{2(n-1)}\left(\frac{n-1}{k}-1\right)\right]\\
&=\frac{n+k-1}{k}\cdot\frac{n+k-1}{2(n-1)}
< \frac{(n+k)^2}{2k(n-1)}.
\end{align*}
Since $\sum_{i,j}d(v_i,v_j)=\mu(G)\cdot{n\choose 2}$, we have
$\sum_{i,j}d(v_i,v_j)<\frac{(n+k)^2n}{4k}$.
It is obvious that $d(v_i,v_j)\geq c_\Gamma(v_i,v_j)$ for any two vertices $v_i,v_j$ of $G$.
Thus,
$$md(G)\leq \frac{\Sigma_{i\in \Gamma}|S_i|}{k(n-k)}= \frac{\sum_{i,j}c_\Gamma(v_i,v_j)}{k(n-k)}\leq\frac{\sum_{i,j}d(u,v)}{k(n-k)}
<\frac{(n+k)^2n}{4k^2(n-k)}
=\frac{(1+\varepsilon)^2}{4\varepsilon^2(1-\varepsilon)}.$$
The proof is thus complete.
\end{proof}

\begin{remark} Since $\varepsilon<\frac 1 2$, we have
$\frac{(1+\varepsilon)^2}{4\varepsilon^2(1-\varepsilon)}<(\frac 3 2)^2/2\varepsilon^2=\frac {9} {8\varepsilon^2}.$
This means that when the connectivity of a graph increases, its $MD$-number could decrease, and the upper bound is $4$
when $\varepsilon$ is getting to $\frac 1 2$.
\end{remark}

The following result gives a relationship between the monochromatic disconnection
number and the independent number of a graph.

\begin{theorem}
If $G$ is a $2$-connected graph, then $md(G)\leq \alpha(G)$.
The bound is sharp.
\end{theorem}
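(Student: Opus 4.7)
The plan is to induct on $n=|G|$. Let $\Gamma$ be an extremal $MD$-coloring of $G$ with $k=md(G)$ colors. By the unnamed lemma stated just after Lemma~\ref{super-conn}, $md(H_\Gamma)=md(G)$; and since $G$ is a spanning subgraph of $H_\Gamma$ we have $\alpha(H_\Gamma)\le\alpha(G)$, so it suffices to find an independent set of size $k$ in $H_\Gamma$, which is a $2$-connected union of cliques (the hyperedges of $\mathcal{H}_\Gamma$) pairwise intersecting in at most one vertex. The easy branch of the induction goes as follows: if some vertex $v$ satisfies that $G-v$ is still $2$-connected, then Lemma~\ref{S-mono-1}(5) gives $md(G)\le md(G-v)$, and combining the inductive hypothesis with the trivial inequality $\alpha(G-v)\le\alpha(G)$ closes the case.

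The delicate branch is when $G$ is critically $2$-connected, i.e. every vertex removal destroys $2$-connectedness. A classical theorem guarantees the existence of a vertex $v$ of degree~$2$ in this situation; write $N(v)=\{u,w\}$. When $G$ is itself a cycle, Lemma~\ref{S-mono-1}(2) gives $md(C_n)=\lfloor n/2\rfloor=\alpha(C_n)$ directly. Otherwise, I would attempt to reduce to a smaller $2$-connected graph by contracting $v$: set $G'=(G-v)+uw$ when $uw\notin E(G)$, so that Lemma~\ref{S-mono-path} ensures $md(G)\ge md(G')$. The aim is to design an extremal $MD$-coloring of $G'$ from one of $G$ that preserves the number of colors, and to lift any independent set of $G'$ back to one of $G$ of the same size (swapping in $v$ if $u$ or $w$ appears in it, which is legal since $v$'s only neighbors are $u$ and $w$). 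The analogous case $uw\in E(G)$ is treated by using the observation that the triangle $uvw$ is forced monochromatic in $\Gamma$ and then analyzing $G-v$ via its block decomposition together with the induced coloring on each block.

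Sharpness is witnessed by the even cycles: $md(C_{2k})=k=\alpha(C_{2k})$ by Lemma~\ref{S-mono-1}(2), so the bound is attained for every $k\ge 2$. The main obstacle I anticipate lies in the contraction step of the hard branch: when the two edges $uv$ and $vw$ carry different colors $c_1\neq c_2$, merging them into a single edge $uw$ may destroy the monochromatic separation between $u$ and $w$ in $G'$, so the coloring does not descend naively and $md$ can actually drop under the contraction. A fallback, bypassing the induction entirely, would be to construct the independent set directly at the hypergraph level --- exhibiting a rainbow hyperpath $F_1,\ldots,F_k$ in $\mathcal{H}_\Gamma$ (one hyperedge of each color with consecutive ones sharing exactly one vertex, whose existence is supported by the earlier observation that $\mathcal{H}_\Gamma$ contains no hyper-$3$- or hyper-$5$-cycle) and then selecting an interior vertex from each $F_i$ as representative, with a separate argument for hyperedges of size~$2$ (where no interior vertex exists) that reroutes the choice through an adjacent hyperedge of larger size guaranteed by $|V_i|\ge 3$ for each color in a $2$-connected graph.
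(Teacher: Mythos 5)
Your easy branch (deleting a vertex $v$ with $G-v$ still $2$-connected and invoking Lemma \ref{S-mono-1}~(5)) coincides with the paper's, and the sharpness witness $C_{2k}$ is fine. The genuine gap is the hard branch, and it sits exactly where you suspect. To push the induction through the contraction $G\mapsto G'=(G-v)+uw$ you need $md(G)\le md(G')$ together with $\alpha(G')\le\alpha(G)$, but Lemma \ref{S-mono-path} only gives the useless direction $md(G)\ge md(G')$, and the needed inequality is false in general: suppressing a degree-$2$ vertex of $C_{2k}$ yields $C_{2k-1}$ and $md$ drops from $k$ to $k-1$. Handling cycles separately does not repair this, because when $\Gamma(uv)\neq\Gamma(vw)$ there is no canonical color for the new edge $uw$ that preserves all monochromatic separations, and you offer no construction for the general critically $2$-connected case. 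The fallback is likewise unsubstantiated: the existence of a rainbow hyperpath in $\mathcal{H}_\Gamma$ meeting each of the $k$ colors exactly once is a strong global structural claim that does not follow from the absence of hyper $3$- and $5$-cycles (those facts constrain only short cycles), and the representative-selection step is undefined for hyperedges of size $2$, which you flag but do not resolve. As it stands the proposal proves the theorem only for graphs that can be peeled down to a cycle by non-cut, non-critical vertex deletions.

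The paper closes the hard branch by a quite different structural reduction. In the critically $2$-connected case it chooses $u$ so that $G-u$ has a maximum block $D_r$, and shows that the block-tree of $G-u$ is a path all of whose blocks other than $D_r$ are single edges; hence $G$ is a $2$-connected core $D_r$ together with a single path $P$ through $u$ attached at two vertices of $D_r$. It then lower-bounds $\alpha(G)\ge\alpha(D_r)+\left\lceil\frac{s-1}{2}\right\rceil$ by combining a maximum independent set of $D_r$ with alternate internal vertices of $P$, and upper-bounds $md(G)\le md(D_r\cup P)$ by a counting argument on an extremal coloring: every edge of $P$ must share its color either with another edge of $P$ or with $D_r$, forcing $\sum_{c}(n_c-1)\ge\left\lceil\frac{s+2}{2}\right\rceil$, after which the induction hypothesis is applied to $D_r$ rather than to a contracted graph. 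If you want to salvage your outline, you would need to replace the single-vertex suppression by a reduction of this global kind, since the local contraction provably loses control of $md$.
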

\begin{proof}
Let $P$ be a path and let $t\geq 2$ be an integer.
Since $\alpha(K_t\Box P)=|P|=md(K_t\Box P)$, the bound is sharp if the result holds.

The proof proceeds by induction on the order $n$ of a graph $G$.
If $n\leq 2\alpha(G)$, then since $G$ is a $2$-connected graph, $md(G)\leq \alpha(G)$.
If $G$ has a vertex $v$ such that $G-v$ is still $2$-connected, then by Lemma \ref{S-mono-1} (5),
we know $md(G-v)\geq md(G)$. Since $\alpha(G-v)\leq \alpha(G)$, by induction, we have $md(G)\leq md(G-v)\leq \alpha(G-v)\leq \alpha(G)$.
Thus, we only need to consider the graph $G$ with the property that $G-v$ is not a $2$-connected graph for any vertex $v$ of $G$.

Let $u$ be a vertex of $G$ such that $G-u$ has a maximum component.
Let $\mathcal{B}=\{D_1,\cdots,D_s\}$ be the set of components of $G-u$ and let $D_r$ be a maximum component.
Let $S$ be the set of cut-vertices of $G-u$.
The {\em block-tree} of $G-u$, denoted by $T$, is a bipartite graph with bipartition $\mathcal{B}$ and $S$,
and a block $D_i$ has an edge with a cut-vertex $v$ in $T$ if and only if $D_i$ contains $v$.
Then the leaves of $T$ are blocks, say $D_{k_1},\cdots,D_{k_l}$.
Since $G$ is $2$-connected, there is a vertex $v_i$ of $D_{k_i}-S$ such that $u$ connects $v_i$ in $G$ for $i\in[l]$.
We use $P_{i,j}$ to denote the subpath of $T$ from $D_{k_i}$ to $D_{k_j}$.
We now prove that $T$ is a path and $D_i$ is an edge for $i\neq r$.
If $T$ is not a path, then $l\geq 3$.
There are two leaves of $T$, say $D_{k_1}$ and $D_{k_2}$, such that $D_r\in V(P_{1,2})$.
Then $G-v_3$ has a component containing $V(D_r)\cup \{u\}$, which contradicts that $D_r$ is maximum.
Thus, $T$ is a tree.
Suppose $r\neq j$ and $D_j$ is not an edge, i.e., $D_j$ is a $2$-connected graph.
Since $T$ is a path, we have $W=V(D_j)-S-\{v_1,\cdots,v_l\}\neq \emptyset$.
Let $u'\in W$.
Then $G-u'$ has a component containing $V(D_r)\cup \{u\}$, which contradicts that $D_r$ is maximum.
Thus, $D_i$ is an edge for $i\neq r$.

Without loss of generality, suppose $V(D_i)\cap V(D_{i+1})=\{u_i\}$ for $i\in[s-1]$.
Then, $D_1,D_s$ are leaves of $T$, $D_i$ is an edge for $i\neq r$ and $S=\{u_1,\cdots,u_{s-1}\}$.
Let $u_0\in V(D_1-S)$ and $u_s\in V(D_s-S)$ be two vertices adjacent to $u$.

Let $P_1=\bigcup_{i<r}D_i$ and let $P_2=\bigcup_{i=r+1}^{s}D_i$.
Then $P_1$ and $P_2$ are paths.
There is an independent set $U_i$ of $P_i$ such that $U_i\cap V(D_r)=\emptyset$ and $|U_i|=\left\lceil\frac{|P_i|-1}{2}\right\rceil$ for $i\in[2]$.
Let $U$ be a maximum independent set of $D_r$.
Then $U\cup U_1\cup U_2$ is an independent set of $G-u$,
i.e.,
\begin{align*}
\alpha(G)&\geq \alpha(G-v)\geq|U\cup U_1\cup U_2|=\alpha(D_r)+\left\lceil\frac{|P_1|-1}{2}\right\rceil+
\left\lceil\frac{|P_2|-1}{2}\right\rceil\\
&\geq \alpha(D_r)+\left\lceil\frac{|P_1|+|P_2|-2}{2}\right\rceil=
\alpha(D_r)+\left\lceil\frac{s-1}{2}\right\rceil.
\end{align*}

Let $P=\{uu_0,uu_s\}\cup(\bigcup_{i\neq r}D_i)$ and let $G'=D_r\cup P$.
Then $P$ is an $(s+1)$-path and $G'$ is a $2$-connected spanning subgraph of $G$.
By Lemma \ref{S-mono-1} (3), we have $md(G)\leq md(G')$.
Let $\Gamma$ be an extremal $MD$-coloring of $G'$.
Then $\Gamma$ is an $MD$-coloring restricted on $D_r$ and $P$.
We call $D_r$ and each edge of $P$ the {\em joints} of $G'$.
Let $C$ be the set of colors $c\in\Gamma(G')$ such that $c$ is in at least two joints of $G'$.
For $c\in C$, we use $n_c$ to denote the number of joints of $G$ having edges colored with $c$.
Then $md(G')=|\Gamma(G')|=|\Gamma(D_r)|+||P||-\Sigma_{c\in C}(n_c-1)$.
Since there is a color $c$ of $C_\Gamma(u_{r-1},u_r)$ that separates $u_{r-1}$ and $u_r$, we have
$c\in \Gamma(D_r)\cap \Gamma(P)$.
By the same reason, for each $e\in E(P)$, either $\Gamma(e)=\Gamma(f)$ for an edge $f$ of $P-e$, or $\Gamma(e)\subseteq \Gamma(D_r)$.
Thus, $\Sigma_{c\in C}(n_c-1)\geq \left\lceil\frac{s+2}{2}\right\rceil$.
Therefore,
\begin{align*}
md(G)&\leq md(G')= |\Gamma(D_r)|+||P||-\Sigma_{c\in C}(n_c-1)\\
&\leq\alpha(D_r)+s+1-\left\lceil\frac{s+2}{2}\right\rceil
=\alpha(D_r)+ \left\lfloor\frac{s}{2}\right\rfloor\\
&=\alpha(D_r)+\left\lceil\frac{s-1}{2}\right\rceil\leq\alpha(G).
\end{align*}
The proof is thus complete.
\end{proof}

\section{Characterization of extremal graphs}

We knew that $md(G)=\left\lfloor\frac{n}{2}\right\rfloor$ if $G$ is a $2$-connected graph.
In this section, we characterize all the $2$-connected graphs with $MD$-number
$\left\lfloor\frac{n}{2}\right\rfloor$. We use $\mathcal{E}=(L_0;L_1,\cdots,L_t)$
to denote an ear-decomposition of $G$, where $L_0$ is a $2$-connected subgraph of $G$
and $L_i$ is a path for $i\in[t]$. Let $Z_\mathcal{E}=\{L_i \ | \ i>0\mbox{ and }end(L_i)\subseteq V(L_0)\}$.

If $C$ is a cycle of $G$ and $v\in V(G)-V(C)$,
then we use $\kappa(v,C)$ to denote the maximum number of $vv_i$-path $P_i$ of $G$,
such that $V(P_i)\cap V(P_j)=\{v\}$ and $V(P_i)\cap V(C)=\{v_i\}$.
We call $H=C\cup (\bigcup_{i=1}^{\kappa(v,C)}P_i)$ a
{\em $(v,C)$-umbrella} of $G$ (or an {\em umbrella} for short) if $\kappa(v,C)\geq3$.
The vertices $v_1,\cdots,v_{\kappa(v,C)}$ divide $C$ into $\kappa(v,C)$ paths, say
$P'_1,\cdots,P'_{\kappa(v,C)}$.
We call $P_i$ a {\em spoke} of $H$ and call $P'_i$ a {\em rim} of $H$.
If the size of each spoke is odd and the size of each rim is even, then we call
the $(v,C)$-umbrella a {\em uniform $(v,C)$-umbrella}
(or {\em uniform umbrella} for short).

A graph $G$ is called a {\em $\theta$-graph}
if $G$ is the union of three
internal disjoint paths $T_1,T_2$ and $T_3$ with
$end(T_1)=end(T_2)=end(T_3)$.
If each $T_i$ is an even path, then we call $G$ an {\em even $\theta$-graph}
and call each $T_i$ a {\em route}.

Suppose $\mathcal{E}=(L_0;L_1,\cdots L_t)$ is an ear-decomposition of $G$.
Then the concept {\em normal ear-decomposition} of $G$ is defined as follows.

$\bullet$ If $|G|$ is even, then $\mathcal{E}$ is a
normal ear-decomposition of $G$ if $L_0$ is a cycle.

$\bullet$ If $|G|$ is odd and $G$ is not a bipartite graph,
then $\mathcal{E}$ is a normal ear-decomposition of $G$ if $L_0$ is an odd cycle.

$\bullet$ If $|G|$ is odd and $G$ is a bipartite graph, then $\mathcal{E}$ is a
normal ear-decomposition of $G$ if $L_0$ is either an umbrella
or an even $\theta$-graph. Moreover, if $L_0$ is an even $\theta$-graph,
then for each $L_i\in Z_\mathcal{E}$, $end(L_i)$ is contained in one route.

\begin{lemma}\label{S-mono-normal}
If $G$ is a $2$-connected graph, then $G$ has a normal ear-decomposition.
\end{lemma}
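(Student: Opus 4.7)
The plan is to split by the parity of $|G|$ and by whether $G$ is bipartite, matching the three bullets in the definition of a normal ear-decomposition, and to use the classical fact that every 2-connected graph admits an ear-decomposition starting from any prescribed 2-connected subgraph. If $|G|$ is even, I would take any cycle as $L_0$. If $|G|$ is odd and $G$ is non-bipartite, I would take any odd cycle as $L_0$. Both cases close immediately.

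The interesting case is $|G|$ odd and $G$ bipartite, which I plan to split further by whether $G$ contains an umbrella. If a $(v,C)$-umbrella $H \subseteq G$ exists, then $H$ is 2-connected and can serve as $L_0$. Otherwise I must produce an even $\theta$-subgraph. The key observation is that in a bipartite graph every $\theta$-subgraph has all three routes of the same parity, since any two routes form a cycle of even length. I then fix any ear-decomposition $L_0', \dots, L_t'$ with $L_0'$ a cycle; because $|L_0'|$ is even while $|G|$ is odd, the identity $|G| = |L_0'| + \sum_{i \ge 1}(||L_i'|| - 1)$ forces some $L_i'$ to have even length. Its endpoints lie in the 2-connected graph $L_0' \cup \cdots \cup L_{i-1}'$, which therefore carries two internally disjoint paths between them; together with $L_i'$ these form an even $\theta$-subgraph $\Theta$. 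I would take $L_0 := \Theta$ and extend.

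What remains is to verify the route condition: every $L_i \in Z_\mathcal{E}$ must have both endpoints in a single route. Since the common endpoints $x, y$ of the routes lie in every route, this can fail only if some ear $L_i$ joins interior points $a \in T_1$ and $b \in T_2$ of distinct routes. Setting $C := T_1 \cup T_3$, the two subpaths of $T_2$ from $b$ to $x$ and to $y$, together with $L_i$ from $b$ to $a$ (whose internal vertices lie outside $V(L_0) \supseteq V(C)$), give three internally disjoint paths from $b$ to $C$, so $\kappa(b,C) \ge 3$ produces an umbrella in $G$, contradicting the subcase hypothesis. I expect the bipartite-odd case to be the main obstacle, since one must dovetail the parity argument (which produces an even $\theta$-subgraph) with the umbrella-detection argument (which enforces the route condition), and both steps rely critically on the no-umbrella assumption.
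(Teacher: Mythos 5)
Your proof is correct and follows essentially the same route as the paper: the same parity count $|G|=|L_0'|+\sum_{i\ge 1}(||L_i'||-1)$ locating an even ear, the same bipartite argument forcing the resulting $\theta$-graph to be even, and the same $\kappa(b,C)\ge 3$ construction producing an umbrella when some ear straddles two routes. The only difference is organizational --- you case-split on umbrella existence up front, while the paper builds the even $\theta$-graph first and falls back to an umbrella only if the route condition fails --- which is logically equivalent.
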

\begin{proof}
If $n$ is even or $G$ is a nonbipartite graph with $n$ odd, then $G$
has a normal ear-decomposition.
If $G$ is a bipartite graph and $n$ is odd,
then let $\mathcal{E}=\{L_0;L_1,\cdots,L_t\}$
be an ear-decomposition of $G$
with $L_0$ an even cycle.
Since $n=|L_0|+\Sigma_{i\in [t]}(|L_i|-2)$ and $n$ is odd, there is an even path among the ears, say $L_i$.
Since $H=\bigcup_{l=0}^{i-1}L_i$ is a $2$-connected bipartite graph,
there is an even cycle $C$ of $H$ containing $end(L_i)$.
Moreover, $end(L_i)$ divides $C$ into two even paths.
So, $L'_0=C\cup L_i$ is an even $\theta$-graph, say the three routes are $T_1,T_2$ and $T_3$.
Let $\mathcal{E}'=\{L'_0;L'_1,\cdots,L'_s\}$
be an ear-decomposition of $G$ and let $end(L'_j)=\{u_i,v_i\}$ for $j\in[s]$.
If the ends of each $L'_j$ in $Z_{\mathcal{E}'}$ are contained in one route, then $\mathcal{E}'$ is a
normal ear-decomposition of $G$.
Otherwise, suppose $L'_j\in Z_{\mathcal{E}'}$,
$u_j\in I(T_1)$ and $v_j\in I(T_2)$.
Then $\kappa(u_j, T_2\cup T_3)\geq 3$,
i.e., there is a $(u_j, T_2\cup T_3)$-umbrella, say $M$.
Then there is a normal ear-decomposition of $G$ containing $M$.
\end{proof}

\begin{lemma} \label{2-con-eard}
Suppose $G$ is a $2$-connected graph with
$md(G)=\left\lfloor\frac{n}{2}\right\rfloor$.
Let $\mathcal{E}=(L_0;L_1,\cdots,L_t)$ be an ear-decomposition of $G$ with
$L_0$ a $2$-connected subgraph of $G$ and $end(L_i)=\{a_i,b_i\}$ for $i\in[t]$.
Then we have the following results.
\begin{enumerate}
\item If $H$ is a $2$-connected subgraph of $G$,
then each extremal $MD$-coloring of $G$ is an extremal $MD$-coloring restricted on $H$, and $md(H)=\left\lfloor\frac{|H|}{2}\right\rfloor.$
\item If $n$ is even, then $G$ is a bipartite graph
  and $L_i$ is an odd path for $i\in[t]$.
\item If $n$ is odd, then
 when $|L_0|$ is even, exact one of $\{||L_1||,\cdots,||L_t||\}$ is even; when $|L_0|$ is odd, $L_i$ is an odd path for $i\in[t]$.
\end{enumerate}
\end{lemma}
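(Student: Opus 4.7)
Plan. The natural strategy is to prove (1) first and then deduce (2) and (3) by carefully tracking parities along the ear decomposition. For (1), I would induct on $n-|H|$: the base case $H=G$ is immediate, and for the step, I would use the classical fact that, since $G$ and $H$ are both 2-connected with $H\subseteq G$, there is an ear decomposition of $G$ starting from $H$. Pick such a decomposition, let $L$ be its last ear (of length $\ell$), and set $G^*=G-I(L)$, so that $G^*$ is a 2-connected subgraph with $H\subseteq G^*\subsetneq G$ and $|G^*|=n-\ell+1$. It then suffices to show that $md(G^*)=\lfloor|G^*|/2\rfloor$ and that an extremal $MD$-coloring $\Gamma$ of $G$ restricts to an extremal $MD$-coloring of $G^*$; iterating yields the claim for $H$.

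For this step, any monochromatic $uv$-cut of $G$ intersects $E(G^*)$ in a monochromatic $uv$-cut of $G^*$, so $\Gamma|_{G^*}$ is automatically an $MD$-coloring of $G^*$. Letting $p=|\Gamma(G)\setminus\Gamma(G^*)|$ be the number of colors appearing only on $E(L)$, we have $|\Gamma(G^*)|=\lfloor n/2\rfloor-p$. By Theorem \ref{S-2con}, it suffices to prove $p\leq\lfloor n/2\rfloor-\lfloor|G^*|/2\rfloor$. For this, fix a path $P$ in $G^*$ joining the endpoints of $L$; then $C=L\cup P$ is a cycle of length $\ell+||P||$, and by Lemma \ref{S-mono-1}(2), $|\Gamma(C)|\leq\lfloor(\ell+||P||)/2\rfloor$. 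Since the $p$ colors in $\Gamma(L)\setminus\Gamma(G^*)$ lie in $\Gamma(C)$ but are disjoint from $\Gamma(P)\subseteq\Gamma(G^*)$, we obtain $p+|\Gamma(P)|\leq\lfloor(\ell+||P||)/2\rfloor$, from which the required bound follows after choosing $P$ suitably, exploiting the 2-connectivity of $G^*$ to select between the two available internally disjoint $ab$-paths so that $|\Gamma(P)|$ is sufficiently large.

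For (2) and (3), part (1) applies to every partial union $G_i=L_0\cup L_1\cup\cdots\cup L_i$, giving $md(G_i)=\lfloor|G_i|/2\rfloor$. Setting $\Delta_i=md(G_i)-md(G_{i-1})$, a direct computation shows that $\Delta_i=(\ell_i-1)/2$ if $\ell_i$ is odd, $\Delta_i=\ell_i/2-1$ if $\ell_i$ is even and $|G_{i-1}|$ is even, and $\Delta_i=\ell_i/2$ if $\ell_i$ is even and $|G_{i-1}|$ is odd. Reapplying the cycle argument of (1) bounds $\Delta_i\leq\lfloor(\ell_i-1)/2\rfloor$ except when the cycle $L_i\cup P$ used is odd, in which case $\Delta_i$ may be one larger. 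Matching the two sides shows that every even ear $L_i$ must create an odd cycle in $G_i$. For (2), $n$ even combined with the parity identity $|L_0|+\sum(\ell_i-1)\equiv 0\pmod 2$ and Lemma \ref{S-mono-normal} (which provides a normal ear decomposition) forces all ears to be odd and $L_0$ to be an even cycle, yielding bipartiteness. For (3), the identity $\sum(\ell_i-1)\equiv n-|L_0|\pmod 2$ combined with the extremality constraint gives: zero even ears when $|L_0|$ is odd (even count), and exactly one even ear when $|L_0|$ is even (odd count).

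The main obstacle is establishing the per-ear upper bound on $p$ (equivalently on $\Delta_i$) via the cycle argument: this requires a careful choice of the path $P$ in $G^*$ (or in $G_{i-1}$) and a case analysis by the parities of $\ell$ and $||P||$ in order to extract the precise parity-bonus behavior. A further subtlety in (2) and (3) is reconciling the upper bound with the forced $\Delta_i$ values to deduce the stated structural constraints on the ear parities; here the structural information about $L_0$ furnished by Lemma \ref{S-mono-normal} (even cycle, odd cycle, umbrella, or even $\theta$-graph) is essential.
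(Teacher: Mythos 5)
Your overall architecture (peel off ears, bound the number of colors that are ``new'' to each ear, and telescope) matches the paper's, which does the same in one global sum
$md(G)=|\Gamma(L_0)|+\sum_i|\Gamma(L_i)\setminus\Gamma(\bigcup_{l<i}L_l)|$ rather than by induction. The difference is the mechanism for the per-ear bound, and there your argument has a genuine gap. You bound $p=|\Gamma(L)\setminus\Gamma(G^*)|$ by embedding $L$ in a cycle $C=L\cup P$ and using $|\Gamma(C)|\le\lfloor(\ell+\|P\|)/2\rfloor$, which yields $p\le\lfloor(\ell+\|P\|)/2\rfloor-|\Gamma(P)|$. To recover the needed bound $p\le\lfloor n/2\rfloor-\lfloor|G^*|/2\rfloor$ you would need $|\Gamma(P)|\gtrsim\|P\|/2$, i.e.\ a nearly rainbow path, and nothing at this stage guarantees that: the two internally disjoint $ab$-paths in $G^*$ could both be long and monochromatic (a monochromatic cycle is a perfectly valid $MD$-coloring of that cycle), in which case your inequality gives only $p\le\lfloor(\ell+\|P\|)/2\rfloor-1$, which is useless for long $P$. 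That paths and cycles of $G$ are richly colored is a \emph{consequence} of the lemma, so it cannot be fed into its proof. The paper's bound comes from a different and stronger observation that never looks outside the ear: any color $p$ appearing on a single edge $e=xy$ of $L_i$ and nowhere in $\bigcup_{l\le i}L_l$ cannot separate $x$ from $y$ (since $\bigcup_{l\le i}L_l-e$ is connected and avoids color $p$), so every new color occupies at least two edges of $L_i$; together with $\Gamma(L_i)\cap\Gamma(\bigcup_{l<i}L_l)\neq\emptyset$ (any monochromatic $a_ib_i$-cut must meet both $L_i$ and an $a_ib_i$-path in the earlier part) this gives $p\le\lfloor(\ell-1)/2\rfloor$ unconditionally.

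Two further points. First, parts (2) and (3) then drop out of a single floor-sum equality: $\lfloor|L_0|/2\rfloor+\sum_i\lfloor(\|L_i\|-1)/2\rfloor=\lfloor n/2\rfloor$ with $|L_0|+\sum_i(\|L_i\|-1)=n$ forces the number of ``half-integer'' terms to be $0$ when $n$ is even and exactly $1$ when $n$ is odd, which is precisely the stated parity pattern; your ``every even ear must create an odd cycle'' step is an artifact of the flawed cycle bound and does not correspond to anything correct (with the right bound there is no parity bonus to match). Second, statements (2) and (3) are asserted for the \emph{given} ear-decomposition with $L_0$ an arbitrary $2$-connected subgraph, so invoking Lemma \ref{S-mono-normal} to normalize $L_0$ is off-target there; the paper only uses the freedom to choose $L_0=H$ when proving (1).
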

\begin{proof}
Let $\Gamma$ be an extremal $MD$-coloring of $G$.
Then for each $i\in[t]$,
$\Gamma(L_i)\cap \Gamma(\bigcup_{l=0}^{i-1}L_l)\neq \emptyset$;
otherwise, $C_\Gamma(a_i,b_i)=\emptyset$, a contradiction.
Moreover, each color of $\Gamma(L_i)-\Gamma(\bigcup_{l=0}^{i-1}L_l)$
is used on at least two edges of $L_i$.
Otherwise, suppose $p\in\Gamma(L_i)-\Gamma(\bigcup_{l=0}^{i-1}L_l)$ and color $p$ is only used on one edge $e=xy$ of $L_i$.
Then since $\Gamma(\bigcup_{l=0}^{i}L_l)-e$ is connected,
$C_\Gamma(x,y)=\emptyset$, a contradiction.
Therefore,
\begin{align*}
\left\lfloor\frac{n}{2}\right\rfloor&=md(G)
=|\Gamma(L_0)|+\Sigma_{i=1}^t|\Gamma(L_i)-\Gamma(\bigcup_{l=0}^{i-1}L_l)|\\
&\leq  md(L_0)+\Sigma_{i=1}^t
\left\lfloor\frac{||L_i||-1}{2}\right\rfloor\\
&\leq \left\lfloor\frac{|L_0|}{2}\right\rfloor+\Sigma_{i=1}^t
\left\lfloor\frac{||L_i||-1}{2}\right\rfloor\\
&\leq\left\lfloor\frac{|L_0|}{2}+\Sigma_{i\in[t]}
\frac{||L_i||-1}{2}\right\rfloor
=\left\lfloor\frac{n}{2}\right\rfloor.
\end{align*}
Then $|\Gamma(L_0)|=md(L_0)=\left\lfloor\frac{|L_0|}{2}\right\rfloor$ and
$|\Gamma(L_i)|=\left\lfloor\frac{||L_i||-1}{2}\right\rfloor$ for each $i\in[t]$.
So, $\Gamma$ is an extremal $MD$-coloring restricted on $L_0$, and
$md(L_0)=\left\lfloor\frac{|L_0|}{2}\right\rfloor$.
Moreover, $|\Gamma(L_i)\cap\Gamma(\bigcup_{l=0}^{i-1}L_l)|=1$
when $L_i$ is an odd path.

If $G$ is not a bipartite graph, $n$ is even
and $L_0$ an odd cycle,
then the above inequality does not hold.
Thus, $G$ is a bipartite graph when $n$ is even.
Moreover, $L_i$ is an odd path for each $i\in[t]$.
If $n$ and $|L_0|$ are odd,
then $L_i$ is an odd path for $i\in[t]$.
If $n$ is odd and $|L_0|$ is even, then exact one of
$\{||L_1||,\cdots,||L_t||\}$ is even.
\end{proof}

For a normal ear-decomposition
$\mathcal{E}=\{L_0;L_1,\cdots,L_t\}$ of a $2$-connected graph $G$,
if $L_0$ is an odd cycle and $L_i\in Z_\mathcal{E}$,
then $end(L_i)$ divides $L_0$ into an odd path and an even path,
which are denoted by $f_o(\mathcal{E},i)$ and $f_e(\mathcal{E},i)$, respectively. If $L_0$ is an even cycle, $L_i\in Z_\mathcal{E}$ and $e\in E(L_0)$, then we use $g(\mathcal{E},i,e)$ to denote the
subpath of $L_0$ with ends $end(L_i)$ and $g(\mathcal{E},i,e)$ contains $e$. We define a function $f(\mathcal{E},i,j)$ for $0\leq i<j\leq t$ as follows.
\begin{equation*}
f(\mathcal{E},i,j)=
\begin{cases}
f_o(\mathcal{E},j)& i=0,L_j\in Z_\mathcal{E}\mbox{ and }L_0
                 \mbox{ is an odd cycle};\\
g(\mathcal{E},i,e)& i=0,L_j\in Z_\mathcal{E}\mbox{ and }L_0
                 \mbox{ is an even cycle with }e\in E(L_0);\\
a_jPb_j& i=0,L_j\in Z_\mathcal{E},L_0\mbox{ is an umbrella},
            P\mbox{ is either a spoke}\mbox{ or a rim of }\\
            &L_0\mbox{ such that }end(L_j)\subseteq V(P);\\
a_jTb_j& i=0,L_j\in Z_\mathcal{E},L_0\mbox{ is an even }
          \theta\mbox{-graph},\ T\mbox{ is one of the three }\\
            &\mbox{routes such that }end(L_i)\subseteq V(T);\\
a_jL_ib_j& i>0\mbox{ and }end(L_j)\subseteq V(L_i);\\
K_4& otherwise.
\end{cases}
\end{equation*}
If $L_0$ is not an even cycle, then the function depends only on $\mathcal{E},i$ and $j$.
If $L_0$ is an even cycle and $i=0$, then the function also depends on $e$.
Thus, we need to fix an edge $e$ of $L_0$ in advance if $L_0$ is an even cycle.
\begin{lemma}\label{S-mono-ex}
If $G$ is a uniform umbrella or an even $\theta$-graph other than $K_{2,3}$, then
$|G|$ is odd and $md(G)=\left\lfloor\frac{|G|}{2}\right\rfloor$.
\end{lemma}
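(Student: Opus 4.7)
The plan has three parts: compute $|G| \pmod 2$, invoke Theorem~\ref{S-2con} for the upper bound, and exhibit an MD-coloring of $G$ with $\lfloor|G|/2\rfloor$ colors.

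The parity count is direct. For an even $\theta$-graph $G = T_1 \cup T_2 \cup T_3$, one has $|G| = 2 + \sum_{i=1}^{3}(||T_i||-1)$, which is the sum of three odd integers plus $2$, hence odd. For a uniform $(v,C)$-umbrella $H = C \cup \bigcup_{i=1}^{\kappa(v,C)} P_i$, $|C| = \sum_i ||P'_i||$ is even since each rim is even, each spoke $P_i$ contributes $||P_i||-1$ (an even number of) internal vertices, and one adjoins the extra vertex $v$; so $|H|$ is odd. The upper bound $md(G) \leq \lfloor|G|/2\rfloor$ follows at once from Theorem~\ref{S-2con} because $G$ is $2$-connected.

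For the lower bound, I would build the coloring using an ear-decomposition together with the symmetric pairing idea underlying Lemma~\ref{S-mono-path}. In the even $\theta$-graph case, the hypothesis $G \neq K_{2,3}$ lets me assume $a_3 \geq 2$ after relabeling; take $L_0 = T_1 \cup T_2$, an even cycle with $md(L_0) = a_1 + a_2$ by Lemma~\ref{S-mono-1}(2) realized by pairing opposite edges, and let $L_1 = T_3$ be the one ear. Writing $T_3 = u e_1 x_1 e_2 \cdots e_{2a_3} w$, introduce $a_3 - 1$ fresh colors $c_1, \ldots, c_{a_3-1}$ and set $\Gamma(e_j) = \Gamma(e_{2a_3+1-j}) = c_j$ for $j \in [a_3-1]$, while the two middle edges $e_{a_3}, e_{a_3+1}$ inherit two distinct existing cycle colors, chosen so that the corresponding cycle-cuts of $L_0$ place $u$ and $w$ on opposite sides in a way compatible with $T_3$. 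For the uniform umbrella, take $L_0 = P_1 \cup P'_1 \cup P_2$, an even cycle of length $\text{odd}+\text{even}+\text{odd}$, MD-color it standardly, and add the remaining spokes and rims as successive ears using the same symmetric pairing.

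The main obstacle will be verifying that the extended coloring is MD. Inside $L_0$, the cycle MD-coloring is unchanged and already separates every pair of vertices of $L_0$. Each new color $c_j$ plainly separates the middle slice $\{x_j, \ldots, x_{2a_3-j}\}$ of $T_3$ from everything else, which handles all pairs $(x_p, x_q)$ on $T_3$ with $p+q \neq 2a_3$, together with the cross-pairs between internal $T_3$-vertices and $L_0$-vertices in which the $T_3$-vertex lies strictly between the paired edges. The antipodal pairs $(x_p, x_{2a_3-p})$ on $T_3$ are the delicate ones, invisible to every $c_j$; they must be separated using the two middle-edge colors, and checking this is where the choice made in the previous paragraph pays off. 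This is also exactly where the hypothesis $G \neq K_{2,3}$ is essential, since only then is the relevant ear long enough ($2a_3 \geq 4$) to possess genuinely antipodal internal pairs and to allow the two middle edges to receive distinct $L_0$-colors lying on opposite sides of its MD-cut.
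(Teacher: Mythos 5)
Your parity computation and the upper bound via Theorem~\ref{S-2con} are fine and agree with the paper. The gap is in the lower-bound construction for the even $\theta$-graph, and it is not merely an unverified step: as specified, the coloring fails. Write $||T_i||=2a_i$ and take $L_0=T_1\cup T_2$ with the opposite-edge pairing. A colour class $\{f,f'\}$ of this cycle colouring remains an edge-cut of $G=L_0\cup T_3$ only if $u$ and $w$ lie on the same side of it, or if some edge of $T_3$ also carries that colour; and one checks that exactly $2\min(a_1,a_2)$ of the $a_1+a_2$ colour classes have one edge in $T_1$ and one in $T_2$, hence separate $u$ from $w$ and are destroyed by the ear. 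You can rescue only two of them on the middle edges $e_{a_3},e_{a_3+1}$. Now recall that in the opposite-edge colouring of a cycle, two adjacent vertices are separated by exactly one colour, namely the colour of the edge joining them. So take $a_1=a_2=2$ (routes $T_1,T_2$ of length $4$): all four cycle colours separate $u$ from $w$, at most two survive, and for either of the two unsaved colours the adjacent pair of cycle vertices joined by an edge of that colour is separated by no colour of $G$ whatsoever (the fresh colours $c_j$ only cut off slices of $I(T_3)$). The same obstruction hits your umbrella construction when later ears join vertices that many colours of the current colouring separate. Your delicacy analysis focused on the antipodal pairs of $T_3$, but those are actually fine; the pairs that break are inside $L_0$.

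The paper avoids this entirely by never attaching an ear between distant vertices of an already-coloured graph. It exhibits an explicit extremal colouring of the \emph{minimum} configurations (the even $\theta$-graph with routes $2,2,4$, and the umbrella with $k$ single-edge spokes and $k$ rims of length $2$), verifies those by hand, and then obtains the general uniform umbrella or even $\theta$-graph by replacing edges with odd paths and invoking Lemma~\ref{S-mono-path}. That lemma only has to carry \emph{one} colour (the one on the replaced edge) across the new path, which is why its middle-edge trick works there but cannot be transplanted to a genuine ear addition. If you want to salvage your ear-based plan, you would have to start from a colouring of $T_1\cup T_2$ in which only one or two colour classes straddle $T_1$ and $T_2$, which is no longer the standard cycle colouring and would itself require a fresh MD verification; reducing to the minimum case as the paper does is the cleaner route.
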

\begin{proof}
It is obvious that $|G|$ is odd.
Fix an integer $k\geq 3$.
Suppose $G'$ is either a minimum even $\theta$-graph other than $K_{2,3}$,
or a minimum uniform umbrella with $k$ spokes.

If $G'$ is a minimum even $\theta$-graph other than $K_{2,3}$,
then $G'$ and one of its extremal $MD$-colorings are depicted in Figure \ref{S-mono-umbre-base} (1),
which implies $md(G')=3= \left\lfloor\frac{|G'|}{2}\right\rfloor$.

If $G'$ is a minimum uniform umbrella with $k$ spokes,
then each spoke is an edge and each rim is a $2$-path.
Suppose the $k$ spokes are $e_1=vv_1,\cdots,e_k=vv_k$, and the $k$ rims are
$P_1=v_1f_1u_1f_2v_2,\cdots,P_k=v_kf_{2k-1}u_kf_{2k}v_1$.
We color each $e_i$ with $i$.
The colors of the edges of $P_i$ obey the rule that
opposite edges of any $4$-cycle have the same color (see Figure
\ref{S-mono-umbre-base}).
\begin{figure}[h]
    \centering
    \includegraphics[width=250pt]{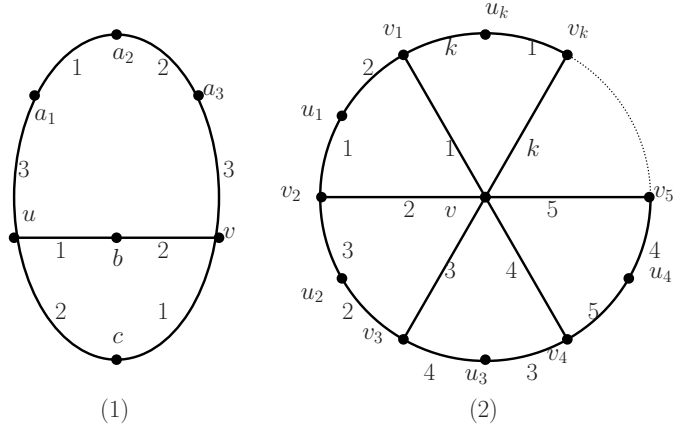}\\
    \caption{Extremal $MD$-colorings of the minimum even $\theta$-graph and the minimum uniform umbrella.} \label{S-mono-umbre-base}
\end{figure}
Since $k\geq 3$, we know that for $v_1$, $\{e_1,f_2,f_{2k-1}\}$ is a monochromatic $v_1v$-cut (it is also a monochromatic $v_1v_i$-cut for $i\neq 1$, and a monochromatic $v_1u_i$-cut for $i\neq \{1,2,k\}$), $\{e_2,f_1,f_4\}$ is a monochromatic $v_1u_1$-cut and
$\{e_k,f_{2k},f_{2k-3}\}$ is a monochromatic $v_1u_k$-cut.
By symmetry, the edge-coloring is an $MD$-coloring of $G'$ with $k$ colors.
Since $G'$ is $2$-connected and $|G'|=2k+1$, we have $md(G')=k=
\left\lfloor\frac{|G'|}{2}\right\rfloor$.

Suppose $G$ is a uniform umbrella with $k$ spokes
(an even $\theta$-graph other than $K_{2,3}$).
Then $G$ is obtained from
$G'$ by replacing some edges with odd paths, respectively.
W.l.o.g., suppose $G$ is obtained from $G'$ by replacing one edge with
an odd path $P$.
Then by Lemma \ref{S-mono-path}, we have $md(G)\geq md(G')+
\left\lfloor\frac{||P||-1}{2}\right\rfloor=
\left\lfloor\frac{|G|}{2}\right\rfloor$, i.e.,
$md(G)=\left\lfloor\frac{|G|}{2}\right\rfloor$.
The proof is thus complete.
\end{proof}

\begin{lemma}\label{S-mono-uniform}
If $G$ is a bipartite graph of odd order and
$md(G)=\left\lfloor\frac{n}{2}\right\rfloor$,
then each umbrella of $G$ is a uniform umbrella.
\end{lemma}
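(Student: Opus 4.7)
The plan is to treat the umbrella $H = C \cup \bigcup_{i=1}^{\kappa} P_i$ as a $2$-connected subgraph of the bipartite $G$ in its own right. By Lemma~\ref{2-con-eard}(1), $md(H) = \lfloor |H|/2 \rfloor$, and since $H$ inherits bipartiteness from $G$, the cycle $C$ has even length. It suffices to show that every spoke has odd length: once that is in hand, bipartiteness places every foot $v_i$ on the side of the bipartition opposite to $v$, forcing every rim (a $C$-path between two same-side vertices) to have even length, so $H$ is uniform.

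I control spoke parities by running Lemma~\ref{2-con-eard}(2) and (3) against the ear decomposition $\mathcal{E}_{i,j} = (C;\, P_i \cup P_j,\, P_{k_1},\ldots,P_{k_{\kappa-2}})$ for varying unordered pairs $\{i,j\}$, whose first ear $L_1 = P_i \cup P_j$ has $||L_1||$ of the same parity as $||P_i|| + ||P_j||$. If $|H|$ were even, Lemma~\ref{2-con-eard}(2) would force every ear to be odd, so $||P_i||+||P_j||$ would be odd for every pair, meaning every two spokes have different parities -- impossible for $\kappa \geq 3$. Hence $|H|$ is odd; and since $|L_0| = |C|$ is even, Lemma~\ref{2-con-eard}(3) forces exactly one ear of $\mathcal{E}_{i,j}$ to be even. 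Letting $e$ denote the number of even-length spokes, choosing $\{i,j\}$ from the odd spokes (possible whenever $\kappa - e \geq 2$) forces $e = 0$, while any choice containing an even spoke is consistent only with $e = 2$. Therefore $e = 0$, except possibly in the borderline configuration $\kappa = 3$, $e = 2$.

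To eliminate the residual case I would switch the base $L_0$ of the ear decomposition. With $P_1, P_2$ the two even spokes and $P_3$ the odd one, bipartiteness of the cycle $P_1 \cup P'_{12} \cup P_2$ forces the rim $P'_{12}$ to be even, so $P_1 \cup P'_{12} \cup P_2$ is an even cycle, and the subgraph $L_0 := P_1 \cup P'_{12} \cup P_2 \cup P'_{23} \cup P_3$ is a strictly larger $2$-connected subgraph of $H$ with a controllable vertex-count parity. Reapplying Lemma~\ref{2-con-eard}(1) and then (2) or (3) to this $L_0$ with the sole remaining ear $P'_{13}$ should produce the desired parity contradiction. With $e = 0$ established, the bipartite argument from the first paragraph completes the proof.

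The main obstacle is exactly this case $\kappa = 3$, $e = 2$: each of the three natural ear decompositions $\mathcal{E}_{i,j}$ through $L_0 = C$ is internally consistent with Lemma~\ref{2-con-eard}(3), so one cannot close it by the cycle-based analysis alone. The contradiction must come either from a subtler $2$-connected base $L_0$ as suggested above, or from invoking the rigidity of extremal $MD$-colorings of $H$ (every triangle is monochromatic and opposite edges of every $4$-cycle share a color), which imposes severe restrictions on the induced hyperstructure $\mathcal{H}_\Gamma$ of $H$ and should prevent $\lfloor |H|/2 \rfloor$ colors from being realized in that specific configuration.
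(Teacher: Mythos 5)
Your reduction is fine as far as it goes: treating $H$ as a $2$-connected subgraph, invoking Lemma \ref{2-con-eard} (1) to get $md(H)=\left\lfloor\frac{|H|}{2}\right\rfloor$, and playing the ear-decompositions $(C;P_i\cup P_j,\ldots)$ against parts (2) and (3) correctly eliminates every spoke-parity pattern except $e=0$ and the configuration $\kappa=3$ with exactly two even spokes. The gap is in what you do next: you treat $\kappa=3$, $e=2$ as a case still to be contradicted, and propose to kill it with a larger base $L_0$ or with coloring rigidity. No such contradiction exists, because that configuration is realizable in graphs attaining $md(G)=\left\lfloor\frac{n}{2}\right\rfloor$. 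Take the minimum uniform umbrella on $7$ vertices (apex $v$, spokes $vv_1,vv_2,vv_3$, rims $v_1u_1v_2$, $v_2u_2v_3$, $v_3u_3v_1$), which has $md=3=\left\lfloor\frac{7}{2}\right\rfloor$ by Lemma \ref{S-mono-ex}. Viewed as a $(v_1,C')$-umbrella with $C'=vv_2\cup v_2u_2v_3\cup v_3v$, its spokes are $v_1v$, $v_1u_1v_2$, $v_1u_3v_3$ of sizes $1,2,2$ --- precisely your residual case. So the step ``it suffices to show that every spoke has odd length'' aims at a statement that is false for the given presentation, and the subsequent contradiction hunt cannot close.

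The resolution, which is how the paper handles it, is that ``uniform umbrella'' is a property of the subgraph, not of the chosen apex-cycle pair: when $\kappa=3$ with $R_1$ odd and $R_2,R_3$ even, bipartiteness forces $W_1,W_3$ odd and $W_2$ even, and then $H$ is a uniform $(v_1,R_2\cup R_3\cup W_2)$-umbrella --- all three paths out of $v_1$ (namely $R_1,W_1,W_3$) are odd and all three arcs of the new cycle ($R_2,W_2,R_3$) are even. So in that case you do not contradict anything; you simply re-present $H$ and conclude it is uniform. With that one case repaired (conclude uniformity by switching the apex rather than seeking a contradiction), your argument for $\kappa\geq 4$ and for the remaining $\kappa=3$ patterns is a legitimate, slightly different bookkeeping of the same ear-decomposition parity argument the paper uses.
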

\begin{proof}
Suppose $G$ is a bipartite graph of odd order and
$md(G)=\left\lfloor\frac{n}{2}\right\rfloor$.
Let $H$ be a $(v,C)$-umbrella of $G$.
We show that $H$ is a uniform umbrella.

If $\kappa(v,C)=3$, then let $R_1,R_2$ and $R_3$ be spokes of $H$
and $R_i$ be a $vv_i$-path.
Then $C$ is divided into three paths by vertices $v_1,v_2$ and $v_3$
(say, the three paths are $W_1,W_2$ and $W_3$, such that
$end(W_1)=\{v_1,v_2\}$, $end(W_2)=\{v_2,v_3\}$ and $end(W_3)=\{v_1,v_3\}$).
If each $R_i$ is an odd path, then since $G$ is a bipartite graph,
each $W_i$ is an even path, $H$ be a uniform $(v,C)$-umbrella of $G$.
If, by symmetry, $R_1$ is an even path and $R_2,R_3$ are odd paths, then $W_1,W_3$ are odd paths
and $W_2$ is an even path.
Then since $(W_1\cup W_3\cup R_2\cup R_3; R_1,W_2)$
is an ear-decomposition of $H$ containing even paths $R_1$ and $W_2$,
by Lemma \ref{2-con-eard} (1) and (3) this yields a contradiction.
If, by symmetry, $R_1$ is an odd path and $R_2,R_3$ are even paths,
then $H$ is a uniform $(v_1,R_2\cup R_3\cup W_2)$-umbrella.
If each $R_i$ is an even path, then $(C;R_1\cup R_2,R_3)$ is an
ear-decomposition of $H$ containing two even paths, a contradiction.

If $\kappa(v,C)\geq4$, then let $W_1,W_2,W_3,W_4$ be four spokes of
$H$ (let $W_i$ be a $vv_i$ path for $i\in[4]$).
Then $C$ is divided into two paths by $v_2$ and $v_3$
(say, the two paths are $Y_1$ and $Y_2$).
W.l.o.g., suppose $W_1$ is an even path. Then $(Y_1\cup W_2\cup W_3;Y_2,W_4,W_1)$
is an ear-decomposition of $H$.
Since $md(H)=\left\lfloor\frac{|H|}{2}\right\rfloor$
and $W_1$ is an even path, by Lemma \ref{2-con-eard} (4),
$Y_2$ is an odd path.
Since $H$ is a bipartite graph, either $W_2$ or $W_3$ is an even path (say $W_2$).
Then $(C\cup W_3\cup W_4;W_1,W_2)$
is an ear-decomposition of $H$ containing two even paths, a contradiction.
So, each spoke of $H$ is an odd path.
Since $H$ is a bipartite graph, each rim of $H$ is an even path.
\end{proof}

Suppose $\mathcal{E}=(L_0;L_1,\cdots L_t)$ is an ear-decomposition of $G$.
Then $\mathcal{E}$ can have the following possible properties.

{\bf Q}: If $end(L_j)\cap I(L_i)\neq \emptyset$,
then $end(L_j)\subseteq V(L_i)$.

{\bf R}: If $end(L_j)\cap I(f(\mathcal{E},k,i))\neq \emptyset$,
then $f(\mathcal{E},k,j)$ is a proper subpath of $f(\mathcal{E},k,i)$.

The concept {\em standard ear-decomposition} of $G$
is defined as follows.

$\bullet$ If $|G|$ is even, then $\mathcal{E}$ is a
standard ear-decomposition of $G$ if $L_0$ is an even cycle.

$\bullet$ If $|G|$ is odd and $G$ is not a bipartite graph,
then $\mathcal{E}$ is a
standard ear-decomposition of $G$ if $L_0$ is an odd cycle and
$f_e(\mathcal{E},i)\cap f_e(\mathcal{E},j)\neq \emptyset$ for $L_i,L_j\in Z_\mathcal{E}$.

$\bullet$ If $|G|$ is odd and $G$ is a bipartite graph, then $\mathcal{E}$ is a
standard ear-decomposition of $G$ if $L_0$ is either a uniform umbrella
or a even $\theta$-graph other than $K_{2,3}$.
Moreover, for each $L_i\in Z_\mathcal{E}$, if $L_0$
is a uniform umbrella, then $end(L_i)$ is contained in either a rim or a spoke;
if $L_0$ is an even $\theta$-graph other than $K_{2,3}$, then $end(L_i)$ is contained in one route.

Therefore, a standard ear-decomposition of $G$ is also a
normal ear-decomposition of $G$.

\begin{lemma}\label{S-mono-poset}
If $\mathcal{E}=(L_0;L_1,\cdots,L_t)$ is a standard ear-decomposition of $G$ and
$\mathcal{E}$ has properties {\bf Q} and {\bf R}, then
there exist integers $0\leq k<r\leq t$ such that $end(L_r)\subseteq V(L_k)$, and $d(u)=2$ for each $u\in I(f(\mathcal{E},k,r))\cup I(L_r)$.
\end{lemma}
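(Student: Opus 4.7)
The plan is to pick an extremal pair in the set
\[
\mathcal{F} := \{(k,r) : 0 \le k < r \le t,\ \mathrm{end}(L_r) \subseteq V(L_k)\}
\]
and derive the degree conclusion by using properties \textbf{Q} and \textbf{R} as obstructions. First I would check that $\mathcal{F}$ is nonempty: by definition of an ear-decomposition the ear $L_1$ attaches to $L_0$ at both endpoints, so $(0,1) \in \mathcal{F}$. (One should also check that every ear $L_s$ with $s \ge 1$ gives rise to some $(k,s) \in \mathcal{F}$ by \textbf{Q}, but the bare nonemptiness is enough to start.)

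Next, I would choose $(k,r) \in \mathcal{F}$ extremally, in the following lexicographic sense: first take $k$ as \emph{large} as possible, and subject to that, take $|f(\mathcal{E},k,r)|$ as \emph{small} as possible. I claim this pair satisfies the conclusion, and I would prove both degree statements by contradiction.

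For $u \in I(L_r)$ with $d_G(u) \ge 3$: since $u$ is a new vertex introduced by the ear $L_r$ and $d_{L_r}(u)=2$, any additional edge at $u$ must come from a later ear $L_s$ (with $s > r$) having $u$ as an endpoint. Property \textbf{Q} (with $i = r$, $j = s$) then yields $\mathrm{end}(L_s) \subseteq V(L_r)$, so $(r,s) \in \mathcal{F}$. Because $r > k$, this contradicts the maximality of the first coordinate in our extremal choice. For $u \in I(f(\mathcal{E},k,r))$ with $d_G(u) \ge 3$: a short case analysis shows $d_{L_k}(u)=2$ with both edges lying on $f(\mathcal{E},k,r)$ (when $k \ge 1$, because $f$ is a subpath of the path $L_k$; when $k=0$, because internal vertices of a subpath/rim/spoke/route inside $L_0$ pick up exactly two edges from $L_0$ under each allowed structure of a standard ear-decomposition). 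Hence some later ear $L_s$ has $u \in \mathrm{end}(L_s)$, and property \textbf{R} (with $i = r$, $j = s$) gives $f(\mathcal{E},k,s) \subsetneq f(\mathcal{E},k,r)$. In particular $(k,s) \in \mathcal{F}$ has the same first coordinate $k$ but strictly smaller $|f|$, contradicting minimality.

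The main obstacle I anticipate is carefully verifying the $k=0$ piece of the second case, i.e., that in each of the four prescribed shapes of $L_0$ in a standard ear-decomposition (odd cycle, even cycle, uniform umbrella, even $\theta$-graph), every internal vertex of the designated subpath $f(\mathcal{E},0,r)$ truly has $L_0$-degree equal to two. The umbrella and $\theta$-graph cases require checking that the only $L_0$-edges at such a vertex are the two belonging to the surrounding spoke/rim/route; once this local fact is established, the contradictions with \textbf{Q} and \textbf{R} flow cleanly from the extremal choice, completing the proof.
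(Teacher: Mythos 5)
Your proof is correct and follows the same basic strategy as the paper's: fix an ``innermost'' ear by an extremal choice and use \textbf{Q} and \textbf{R} to forbid any later attachment to $I(L_r)$ or to $I(f(\mathcal{E},k,r))$. The only genuine difference is the extremal parameter. The paper introduces an auxiliary digraph on the ears, lets $d_j$ be the depth of $L_j$ over $L_0$ in that digraph, restricts to the set $U$ of ears of maximum depth, and then minimizes $|f(\mathcal{E},l_i,i)|$ over $U$; the contradiction for the second degree condition is then routed back through the depth function (the attaching ear lies outside $U$, hence some still deeper ear exists). You instead order the admissible pairs lexicographically --- host index $k$ maximal, then $|f(\mathcal{E},k,r)|$ minimal --- so that each contradiction falls out in a single step: an attachment to $I(L_r)$ yields, via \textbf{Q}, a pair with strictly larger first coordinate, and an attachment to $I(f(\mathcal{E},k,r))$ yields, via \textbf{R}, a pair with the same first coordinate and strictly smaller $|f|$. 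This buys a cleaner argument with no auxiliary digraph and no $l_i$ bookkeeping; note only that you should record why the attaching ear $L_s$ in the second case has index greater than $k$ (its endpoint $u$ lies in $I(f(\mathcal{E},k,r))\subseteq I(L_k)$, respectively in $V(L_0)$, so $u$ is introduced no later than $L_k$ and any ear ending at $u$ comes after $L_k$), so that $(k,s)$ really belongs to your set $\mathcal{F}$. The local fact you flag --- that internal vertices of $f(\mathcal{E},0,r)$ have degree two in $L_0$ for each allowed shape of $L_0$ in a standard ear-decomposition --- does hold, since $I(f(\mathcal{E},0,r))$ consists of internal vertices of a single rim, spoke, route, or cycle arc; the paper takes this for granted without comment.
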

\begin{proof}
For $i\in[t]$, let $end(L_i)=\{a_i,b_i\}$.
We use $m_r$ ($n_r$) to demote the minimum integer
such that $a_r\in V(L_{m_r})$ ($b_r\in V(L_{n_r})$).
Since $I(L_0)=V(L_0)$, we have
$a_i\in I(L_{m_r})$ and $b_r\in I(L_{n_r})$.
Since $\mathcal{E}$ has property {\bf Q}, we know
for each $i\in[t]$, either $end(L_i)\subseteq V(L_{m_i})$,
or $end(L_i)\subseteq V(L_{n_i})$.
Let $l_i$ be the minimum integer such that $end(L_i)\subseteq V(L_{l_i})$.

Let $D$ be a digraph with vertex-set $V(D)=\{s_0,s_1,\cdots,s_t\}$
and arc-set $A(D)=\{(s_i,s_j) \ | \ f(\mathcal{E},i,j)\neq K_4\}$.
We use $d_j$ to denote the length of a minimum directed path from $s_0$ to $s_j$. If $end(L_j)\cap I(L_i)\neq \emptyset$, then $d_j=d_i+1$.
Let $U=\{j \ | \ d_j\mbox{ is maximum}\}$.
If $j\in U $, then $d_G(u)=2$ for each $u\in I(L_j)$.

Let $i$ be an integer in $U$ such that $|f(\mathcal{E},l_i,i)|$ is minimum.
If there is a vertex $v$ of $I(f(\mathcal{E},l_i,i))$ such that $d_G(v)\geq3$,
then there is a path $L_k$ such that $v\in end(L_k)\cap I(f(\mathcal{E},l_i,i))$.
Since $\mathcal{E}$ has property {\bf R},
$f(\mathcal{E},l_i,k)$ is a proper subpath of $f(\mathcal{E},l_i,i)$, i.e.,
$|f(\mathcal{E},l_i,k)|<|f(\mathcal{E},l_i,i)|$.
Since $|f(\mathcal{E},l_i,i)|$ is minimum, we have $k\notin U$.
Then there is a path, say $L_p$, such that $end(L_p)\cap I(L_k)\neq \emptyset$. Thus, $d_p>d_k=d_i$, a contradiction.
Hence, $d_G(u)=2$ for each $u\in I(f(\mathcal{E},l_i,i))$.
\end{proof}

\begin{theorem}\label{S-mono-mdex}
Suppose $G$ is a $2$-connected graph and
$\mathcal{E}=(L_0;L_1,\cdots L_t)$ is a normal ear-decomposition of $G$.
Then $md(G)=\left\lfloor\frac{n}{2}\right\rfloor$
if and only if
$\mathcal{E}$ is a standard ear-decomposition of $G$
that has properties {\bf Q} and {\bf R},
$L_i$ is an odd path for each $i\in[t]$,
and $f(\mathcal{E},i,j)$ is an odd path
if $f(\mathcal{E},i,j)\neq K_4$.
\end{theorem}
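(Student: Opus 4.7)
The plan is to prove both directions by systematic use of the ear-decomposition machinery together with the tight counting inequality from Lemma \ref{2-con-eard}. The overall strategy uses induction on $t$, the number of ears, reducing to base cases handled by Lemma \ref{S-mono-ex}.

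For necessity, I assume $md(G) = \left\lfloor \frac{n}{2} \right\rfloor$ and unpack what Lemma \ref{2-con-eard} already gives. That lemma forces every ear $L_i$ (with $i \geq 1$) to be an odd path, forces $md(L_0) = \left\lfloor \frac{|L_0|}{2} \right\rfloor$, and constrains the parity of $|L_0|$; combined with Lemma \ref{S-mono-uniform}, this shows that in the bipartite odd-order case $L_0$ must be a uniform umbrella or an even $\theta$-graph distinct from $K_{2,3}$, so $\mathcal{E}$ is standard in the strong sense claimed. To establish property \textbf{Q}, I argue by contradiction: if some $L_j$ has only one endpoint in $I(L_i)$, then re-routing by splitting $L_i$ at that endpoint produces a new normal ear-decomposition containing an even ear, violating parts (2)--(3) of Lemma \ref{2-con-eard}. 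Property \textbf{R} follows similarly: if $f(\mathcal{E}, k, j)$ failed to nest strictly inside $f(\mathcal{E}, k, i)$, a re-routing again creates an even ear. Finally, for the oddness of $f(\mathcal{E}, i, j)$ when it is not $K_4$, the same re-routing paradigm applies, using part (1) of Lemma \ref{2-con-eard} on an appropriate 2-connected subgraph together with Lemma \ref{S-mono-normal} to realize the re-routing as a normal ear-decomposition.

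For sufficiency, I would construct an extremal $MD$-coloring inductively, using Lemma \ref{S-mono-poset} to locate an innermost pair $(k, r)$ such that $L_r$ together with $f(\mathcal{E}, k, r)$ forms a cycle whose internal vertices all have degree two in $G$. Deleting $I(L_r)$ yields a smaller 2-connected graph $G'$ that inherits an ear-decomposition still satisfying all the stated structural conditions; by induction $md(G') = \left\lfloor \frac{|G'|}{2} \right\rfloor$, and Lemma \ref{S-mono-path} (with the shared color supplied by an edge of $f(\mathcal{E}, k, r)$) extends any extremal coloring of $G'$ to one of $G$ that uses $\left\lfloor \frac{\|L_r\|-1}{2} \right\rfloor$ additional colors placed symmetrically on $L_r$. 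The color count reaches $\left\lfloor \frac{n}{2} \right\rfloor$, matching the upper bound from Theorem \ref{S-2con}. Validity of the resulting $MD$-coloring follows because each new color forms a two-edge monochromatic cut at a degree-two vertex of $L_r$, and the nesting forced by \textbf{Q}, \textbf{R}, plus the oddness of $f(\mathcal{E}, i, j)$ ensure the already-established monochromatic cuts inside $G'$ continue to separate every pair of vertices of $G$. The base cases of the induction are cycles, uniform umbrellas, and even $\theta$-graphs, all of which are covered by Lemma \ref{S-mono-ex}.

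The main obstacle will be verifying that deleting the inner ear $L_r$ preserves both 2-connectivity and the full package of conditions on the ear-decomposition, especially in the bipartite odd-order case where $L_0$ itself may be an umbrella or even $\theta$-graph rather than a simple cycle; one must confirm that removing the interior of $L_r$ leaves a $G'$ whose inherited ear-decomposition is still standard with \textbf{Q}, \textbf{R}, and the $f$-oddness intact. Equally delicate is the re-routing argument in the necessity direction, where each counterexample must be converted into a bona fide normal ear-decomposition (not merely an ear-decomposition) in order to legally reinvoke Lemma \ref{2-con-eard}; getting this conversion uniform across the cases where $L_0$ is a cycle, an umbrella, or an even $\theta$-graph will require a careful case analysis guided by the definition of $f(\mathcal{E}, i, j)$.
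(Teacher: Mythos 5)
Your overall strategy coincides with the paper's: necessity via Lemma \ref{2-con-eard} plus re-routing arguments that manufacture a forbidden even ear, and sufficiency by induction on $t$, peeling off an innermost ear located by Lemma \ref{S-mono-poset} and re-attaching it with Lemma \ref{S-mono-path}. However, there is one concrete gap in the sufficiency direction that you flag only generically and do not resolve: the induction genuinely breaks when the reduced base $L_0$ degenerates to $K_{2,3}$. In the paper's reduction, when $l=0$ and $L_0$ is an even $\theta$-graph, contracting $f(\mathcal{E},0,r)$ can turn $L_0$ into $K_{2,3}$, which is excluded from standard ear-decompositions precisely because $md(K_{2,3})=1<\left\lfloor\frac{|K_{2,3}|}{2}\right\rfloor$; the inductive hypothesis is then unavailable, and no re-choice of $r$ fixes this. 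The paper handles this case by first showing (via properties {\bf Q} and {\bf R}) that all ears must share the same pair of endpoints on one route, recognizing the resulting graph as the construction of Remark 1, and writing down an explicit $MD$-coloring. Your proposal contains no substitute for this step, and "verify the inherited decomposition is still standard" cannot succeed here because it simply is not.

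A second, smaller issue: your extension step "delete $I(L_r)$, then extend an extremal coloring of $G'$ using a shared color from an edge of $f(\mathcal{E},k,r)$" is not yet a proof that the extended coloring is an $MD$-coloring. The danger is pairs $u,v\in V(G')$ whose only separating colors $j$ have $a_r$ and $b_r$ in different components of the color-$j$-deleted graph; the new ear can reconnect them. The paper avoids this by contracting $f(\mathcal{E},l,r)$ to a single edge $f$, invoking Lemma \ref{2-con-md1} on $G'_1\cup G'_2$ with $G'_1\cap G'_2=f$ (a one-colored intersection), and only then re-expanding with Lemma \ref{S-mono-path}. You would need either this detour or an argument in the spirit of Lemma \ref{super-conn} that $c_\Gamma(a_r,b_r)=1$ can be arranged; as written, the validity claim for already-separated pairs in $G'$ is asserted rather than proved.
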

\begin{proof}
For $i\in[t]$, let $end(L_i)=\{a_i,b_i\}$.

For the necessity, suppose $md(G)=\left\lfloor\frac{n}{2}\right\rfloor$.
If $n$ is even, then $L_0$ is an even cycle.
By Lemma \ref{2-con-eard} (2), $G$ is a bipartite graph and $L_i$ is an odd path for $i\in[t]$.
Since $f(\mathcal{E},i,j)\cup L_j$ is an even cycle, $f(\mathcal{E},i,j)$
is an odd path.
If $n$ is odd,
then since $\mathcal{E}$ is normal, $|L_0|$ is odd.
By Lemma \ref{2-con-eard} (4),
$L_i$ is an odd path for $i\in[t]$.
Suppose there are integers $i,j$ such that $f(\mathcal{E},i,j)$ is an even path.
If $i=0$ and $L_0$ is an odd cycle,
then $f(\mathcal{E},i,j)=f_o(i,j)$ is an odd path, a contradiction.
If $i>0$ and $L_0$ is an odd cycle, then
$H=L_j\cup(\bigcup_{c=0}^iL_c)$ is a $2$-connected subgraph of $G$
and $(L_0;L_1\cdots,L_{i-1},L_i\cup L_j-I(f(\mathcal{E},i,j)),f(\mathcal{E},i,j))$
is an ear-decomposition of $H$ with $L_0$ an odd cycle and $f(\mathcal{E},i,j)$
an even path, and by Lemma \ref{2-con-eard} (1) and (3) this yields a contradiction.
If $L_0$ is an umbrella or an even $\theta$-graph other than $K_{2,3}$,
then $G$ is a bipartite graph.
Since $f(\mathcal{E},i,j)\cup L_j$ is an even cycle and $L_j$ is an odd path, $f(\mathcal{E},i,j)$
is an odd path, a contradiction.
Thus, $f(\mathcal{E},i,j)$ is an odd path if $n$ is odd.

We need to prove that $\mathcal{E}$ is standard
and $\mathcal{E}$ has properties {\bf Q} and {\bf R} below.

\begin{claim}
$\mathcal{E}$ is standard.
\end{claim}
\begin{proof}
If $n$ is even, then since $G$ is a bipartite graph, $L_0$ is an even cycle. Thus, $\mathcal{E}$ is standard.

If $G$ is not a bipartite graph and $n$ is odd, then $L_0$ is an odd cycle. Suppose $\mathcal{E}$ is not a standard ear-decomposition of $G$.
Then there are paths $L_i$ and $L_j$ of $Z_\mathcal{E}$ such that
$E(f_e(\mathcal{E},i))\cap E(f_e(\mathcal{E},j))= \emptyset$.
Let $D=L_i\cup L_j\cup[L_0-I(f_e(\mathcal{E},i)\cup f_e(\mathcal{E},j))]$.
Then $D$ is $2$-connected subgraph of $L_0\cup L_j\cup L_i$.
Since $(D;f_e(\mathcal{E},i), f_e(\mathcal{E},j))$ is an ear-decomposition of
$L_0\cup L_i\cup L_j$ and $f_e(\mathcal{E},i), f_e(\mathcal{E},j)$ are even paths, by Lemma \ref{2-con-eard} (1) and (3) this yields a contradiction.
Thus, $\mathcal{E}$ is standard.

If $G$ is a bipartite graph, $n$ is odd and $L_0$ is
an even $\theta$-graph,
then $L_0\neq K_{2,3}$.
Otherwise $L_0$ is a $2$-connected subgraph of $G$ with
$md(L_0)=1<\left\lfloor\frac{|L_0|}{2}\right\rfloor$,
and by Lemma \ref{2-con-eard} (1) this yields a contradiction.
Thus, $\mathcal{E}$ is standard.

If $G$ is a bipartite graph, $n$ is odd and $L_0$ is an umbrella,
then suppose the rims of $L_0$ are
$W_1,\cdots,W_k$, where $k\geq 3$ and
$W_i$ is a $v_iv_{i+1}$-path for $i\in[k-1]$.
Suppose the spokes are $R_1,\cdots,R_k$, where $R_i$ is a $vv_i$-path.
Let $C=\bigcup_{i\in[k]} W_i$.
Since $md(G)=\left\lfloor\frac{n}{2}\right\rfloor$,
by Lemma \ref{S-mono-uniform}, $L_0$ is a uniform umbrella, i.e.,
each $W_i$ is an even path and each $R_i$ is an odd path.
Suppose there is a path $L_i$ of $Z_\mathcal{E}$ such that
$end(L_i)$ is neither contained in any spoke nor contained in any rim.
If $a_i\in I(R_j)$ and $b_i\in V(L_0)-V(R_j)$,
then $a_i$ divides $R_j$ into two subpaths
$R^1_j=vL_ja_i$ and $R^2_j=a_iL_jv_j$.
Since $k\geq 3$, w.l.o.g., let $b_j\notin I(W_k)$.
Then $H_s=R^s_j\cup L_i\cup(\bigcup_{l\neq k}W_l)\cup (\bigcup_{l\neq j}R_l)$ is a $2$-connected graph for $s\in[2]$.
Since $L_j$ is an odd path, one of $R^1_j$ and $R^2_j$ is an even path,  say $R^1_j$.
Since $(H_2;W_k,R^1_j)$ is an ear-decomposition of $L_0\cup L_i$ and $W_k,R^1_j$ are even paths, by Lemma \ref{2-con-eard} (1) and (3) this yields a contradiction.
If $end(L_i)\subseteq V(C)$, then since $G$ is a bipartite graph,
$L_i$ is an odd path and each $W_j$ is an even path,
we have $|end(L_i)\cap \{v_1,\cdots,v_k\}|\leq1$.
Therefore, there is a rim $W_j$ such that
$a_i$ divides $W_j$ into two odd paths
$W^1_j=v_jW_ja_i$ and $W^2_j=a_iW_jv_{j+1}$.
(w.l.o.g., suppose $1\leq j<k$).
Since there is no rim containing $end(L_i)$, we have $b_i\notin V(W_j)$.
Note that $end(L_i)$ divides $C$ into two subpaths $C^1$ and $C^2$ such that $v_j\in V(C^1)$ and $v_{j+1}\in V(C^2)$.
Since $k\geq 3$, by symmetry, suppose $|C^1\cap \{v_1,\cdots, v_k\}|\geq 2$.
Then there is an integer $l\in[k]-\{i+1\}$ such that $C^1$ contains $v_i$ and $v_l$.
Then there is an ear-decomposition $(C';P'_1,P'_2,\cdots)$ of
$L_0\cup L_i$ such that
$C'=C^1\cup L_i, P'_1=R_i\cup R_l$ and $P'_2=W^2_i\cup R_{i+1}$.
Since $P'_1$ and $P'_2$ are even paths,
by Lemma \ref{2-con-eard} (3) this yields a contradiction.
Thus $\mathcal{E}$ is standard.
\end{proof}

\begin{claim}
$\mathcal{E}$ has property {\bf Q}.
\end{claim}
\begin{proof}
Let $m_i$ ($n_i$) be the minimum integer such that $a_i\in V(L_{m_i})$
($b_i\in V(L_{n_i})$). Since $I(L_0)=V(L_0)$, we have $a_i\in I(L_{m_i})$ and $b_i\in I(L_{n_i})$. Let $l_i$ be an integer such that $end(L_i)\cap I(L_{l_i})\neq \emptyset$.

Suppose $\mathcal{E}$ does not have property {\bf Q}.
Then there are integers $0\leq j<r\leq t$
such that $a_r\in I(L_j)$ and $b_r\notin V(L_j)$.
Since $b_r\in I(L_{n_r})$, by symmetry, suppose $j>l_{n_r}$.
For convenience, let $l_{n_r}=i$.
Since $L_j$ is an odd path, let $a_jL_ja_r$ be an even path.
Let $l=\max\{m_j,n_j,n_r\}$ and $H=L_j\cup L_r\cup(\bigcup_{h=0}^lL_h)$.
Then $H$ is a $2$-connected graph with an ear-decomposition
$(L_0;L_1,\cdots,L_l,a_rL_jb_j\cup L_r,a_jL_ja_r)$.
If $L_0$ is an odd cycle, or a uniform umbrella, or an even $\theta$-graph other than $K_{2,3}$,
then since $|L_0|$ is odd and $a_jL_ja_r$ is an even path,
by Lemma \ref{2-con-eard} (1) and (3) this yields a contradiction.
If $L_0$ is an even cycle, then by Lemma \ref{2-con-eard} (1) and (2)  this yields a contradiction.
\end{proof}

\begin{claim}
$\mathcal{E}$ has property {\bf R}.
\end{claim}
\begin{proof}
If $\mathcal{E}$ does not have property {\bf R},
then there are integers $r,i,j$
such that $end(L_j)\cap I(f(\mathcal{E},r,i))\neq \emptyset$
and $f(\mathcal{E},r,j)$ is not a subpath of $f(\mathcal{E},r,i)$.
Since $\mathcal{E}$ has property {\bf Q},
$f(\mathcal{E},r,j)$ is a subpath of $L_r$.
Then $end(L_i)$ and $end(L_j)$ appear
alternately on $L=f(\mathcal{E},r,i)\cup f(\mathcal{E},r,j)$, say $a_i,a_j,b_i,b_j$ are consecutively on $L$.
Here, $L$ is a subpath of the path $L_r$ if $r>0$;
$L$ is a subpath of either a rim or a spoke of $L_r$ if $r=0$ and $L_0$ is a uniform umbrella;
$L$ is a subpath of a route if $r=0$ and $L_0$ is an even $\theta$-graph other than $K_{2,3}$;
$L$ is a subpath of a cycle $L_r$ if $r=0$ and $L_0$ is a cycle.
Let $W^1=a_iLa_j,W^2=a_jLb_i$ and $W^3=b_iLb_j$.
Since $f(\mathcal{E},r,i)$ and $f(\mathcal{E},r,j)$ are odd paths,
either $W^1,W^3$ are even paths and $W^2$ is
an odd path, or $W^2$ is an even path and $W^1,W^3$ are odd paths.
Let $H=(\bigcup_{l=0}^rL_l)\cup L_i\cup L_j$.

Suppose $W^1,W^3$ are even paths and $W^2$ is an odd path.
Let $H'$ be a graph obtained from $H$ by removing $W^1$ and $W^3$.
Then $H'$ is a $2$-connected graph.
Since $(H';W^1,W^3)$ is an ear-decomposition of $H$ and $W^1,W^3$ are even paths, by Lemma \ref{2-con-eard} this yields a contradiction.

Suppose $W^2$ is an even path and $W^1,W^3$ are odd paths.
Let $H_i$ be a graph obtain from $H$
by removing $W^i$ for $i\in[3]$.
It is obvious that each $H_i$ is a $2$-connected graph.
If $L_0$ is an even cycle, then $(H_2;W^2)$ is an ear-decomposition of $G$, and by Lemma \ref{2-con-eard} (1) and (2) this yields a contradiction.
If $r=0$ and $L_0$ is an odd cycle, then $P=L_0-I(L)$ is an even path and $C=H_2-I(P)$ is an even cycle.
Since $(C;P,W^2)$ is an ear-decomposition of $H$ and $P,W^2$ are even paths, by Lemma \ref{2-con-eard} (1) and (3) this yields a contradiction.
If $r=0$ and $L_0$ is an even $\theta$-graph, then suppose $T_1,T_2$ and $T_3$ are routes of $L_0$, and suppose $L$ is a subpath of $T_1$.
Then $(H_2-I(T_2);T_2,W^2)$ is an ear-decomposition of $H$ and $T_2,W^2$ are even paths, a contradiction.
If $r=0$ and $L_0$ is a uniform umbrella, then there is a rim $W$ of $L_0$ such that $L$ is not a subpath of $W$.
Then $(H_2-I(W);W,W^2)$ is an ear-decomposition of $H$ and $W,W^2$ are even paths, a contradiction.
If $r>0$ and $n$ is odd, then $(L_0;\cdots,W^2)$ is an ear-decomposition of $H$.
Since $|L_0|$ is odd and $W^2$ is an even path,
by Lemma \ref{2-con-eard} (1) and (3) this yields a contradiction.
\end{proof}

Now for the sufficiency, suppose $\mathcal{E}=(L_0;L_1,\cdots,L_t)$ satisfies all conditions of the theorem, i.e.,
$\mathcal{E}$ is a standard ear-decomposition of $G$ that has properties {\bf Q} and {\bf R},
$L_i$ is an odd path for $i\in [t]$, and $f(\mathcal{E},j,i)$ is an odd path when $f(\mathcal{E},j,i)\neq K_4$.
Recall the definitions of digraph $D$, set $U$ and integer
$l_i$ in Lemma \ref{S-mono-poset}.
We choose an integer $r$ from $U$ such that $|f(\mathcal{E},l_r,r)|$ is minimum. For convenience, let $l=l_r$.
Then for each vertex $u$ of $I(f(\mathcal{E},l,r))\cup I(L_r)$,
we have $d_G(u)=2$.
The proof proceeds by induction on $t$. By Lemmas \ref{S-mono-1} (2) and \ref{S-mono-ex}, the result holds for $t=0$.

If $L_r$ is not an edge, then let $G'$ be a graph obtained from $G$ by replacing $f(\mathcal{E},l,r)$ with an edge $f=a_rb_r$,
let $G'_1=G'-I(L_r)$ and $G'_2=L_r\cup f$.
Let $L=[L_l-I(f(\mathcal{E},l,r))-E(f(\mathcal{E},l,r))]\cup f$. Let $\mathcal{E}'$ be an ear-decomposition of $G'_1$ obtained from $\mathcal{E}$ by removing $L_r$, and then replacing $L_l$ with $L$.
If $l>0$, then since $f(\mathcal{E},l,r)$ is an odd path, $L$ is an odd path and $\mathcal{E}'$ satisfies all the conditions.
If $l=0$ and $L_l$ is a uniform umbrella (an odd cycle or an even cycle), then $L$ is also a uniform umbrella (an odd cycle, an even cycle),
i.e., $\mathcal{E}'$ satisfies all the conditions in this case.
If $l=0$ and $L_l$ is an even $\theta$-graph,
then $\mathcal{E}'$ satisfies all the conditions except for $L=K_{2,3}$.
Thus, $\mathcal{E}'$ satisfies all the conditions unless $L=K_{2,3}$.

If $L\neq K_{2,3}$, then $\mathcal{E}'$ satisfies all the conditions.
Since the number of paths in $\mathcal{E}'$ is $t-1$, by the induction hypothesis we have $md(G'_1)=\left\lfloor\frac{|G'_1|}{2}\right\rfloor$.
Since $G'_2$ is an even cycle, we have $md(G'_2)=\frac{|G'_2|}{2}$.
Thus, by Lemma \ref{2-con-md1}, $md(G')=md(G'_1)+md(G'_2)-1=\left\lfloor\frac{|G'|}{2}\right\rfloor$.
Since $G$ is a graph obtained from $G'$ by replacing $f$ with the odd path $f(\mathcal{E},l,r)$, by Lemma \ref{S-mono-path} we have
$md(G)\geq md(G')+\left\lfloor\frac{||f(\mathcal{E},l,r)||-1}{2}\right\rfloor
=\left\lfloor\frac{n}{2}\right\rfloor$.
Therefore, $md(G)=\left\lfloor\frac{n}{2}\right\rfloor$.

If $L=K_{2,3}$, then $l=0$ and $r=1$.
Since $r\in U$, $d_r$ is maximum and $d_r=1$
(the definition $d_r$ is in the proof of Lemma \ref{S-mono-poset}).
Thus, $L_i\in Z_{\mathcal{E}}$ for each $i\in[t]$.
Let $T_1,T_2$ and $T_3$ be routes of $L_0$ with $|T_1|\leq |T_2|\leq |T_3|$.
Then $T_1$ and $T_2$ are $2$-paths and $f(\mathcal{E},0,r)$ is a subpath of $T_3$ with $|f(\mathcal{E},0,r)|=|T_3|-1$.
Since $L_0\neq K_{2,3}$, we have $|f(\mathcal{E},0,r)|=|T_3|-1\geq4$.
For each $L_i$, if $end(L_i)\cap I(T_j)\neq \emptyset$ for $j\in[2]$,
then $|f(\mathcal{E},0,i)|=2<|f(\mathcal{E},l,r)|$, a contradiction;
if $end(L_i)=end(T_3)$, then $f(\mathcal{E},0,i)$ is an even path, a contradiction.
Thus, $f(\mathcal{E},0,i)$ is a proper subpath of $T_3$
and $|f(\mathcal{E},0,i)|=|f(\mathcal{E},0,r)|$ for each $i\in[t]$.
If $end(L_i)\neq end(L_r)$ for $i,j\in[t]$,
then $end(L_i)\cap I(f(\mathcal{E},0,r))\neq \emptyset$
and $f(\mathcal{E},0,i)$ is not a proper subpath of $f(\mathcal{E},0,r),$
i.e., $\mathcal{E}$ does not have property {\bf R}, a contradiction.
Therefore, $end(L_i)=end(L_j)$ for each $i,j\in[t]$.
Let $H=T_2\cup T_3\cup (\bigcup_{i\in[t]}L_i)$.
Then $H$ is a graph constructed in Remark 1.
Thus, $md(H)=\frac{|H|}{2}$.
Suppose $\Gamma$ is an extremal $MD$-coloring of $H$ (see Remark 1).
Let $T_1=ue_1ae_2v$ and $T_2=uf_1bf_2v$.
Since $G=H\cup T_1$, let $\Gamma'$ be an edge-coloring of $G$ such that
$\Gamma(e)=\Gamma'(e)$ for each $e\in E(H)$, and $\Gamma(e_1)=\Gamma'(f_2)$ and $\Gamma(e_2)=\Gamma'(f_1)$.
Then $\Gamma'$ is an $MD$-coloring of $G$ with $\left\lfloor\frac{n}{2}\right\rfloor$
colors, i.e., $md(G)=\left\lfloor\frac{n}{2}\right\rfloor$.

If $L_r$ is an edge, then replace $L_l$ by
$L_l\cup L_r-I(f(\mathcal{E},l,r))$
and replace $L_r$ by $f(\mathcal{E},l,r)$.
Then the new ear-decomposition also satisfies all the conditions.
Moreover, $d_r$ is maximum and $|f(\mathcal{E},l_r,r)|=2$ is minimum in the new ear-decomposition.
Since $L_r$ is not an edge in the new ear-decomposition, this case has been discussed above.
\end{proof}

\begin{remark} Recalling the proof of Lemma \ref{S-mono-normal}, we can find a normal ear-decomposition for a given $2$-connected graph in polynomial time. For a normal ear-decomposition $\mathcal{E}$ of $G$, deciding whether $\mathcal{E}$ satisfies all the conditions of Theorem \ref{S-mono-mdex} can be done in polynomial time.
Thus, given a $2$-connected graph $G$, deciding whether $md(G)=\left\lfloor\frac{n}{2}\right\rfloor$
is polynomially solvable.
\end{remark}

\begin{corollary}
If $G$ is a $2$-connected graph with
$md(G)=\left\lfloor\frac{|G|}{2}\right\rfloor$, then $G$ is a planar graph.
\end{corollary}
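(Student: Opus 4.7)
The plan is to apply Theorem~\ref{S-mono-mdex} to fix a standard ear-decomposition $\mathcal{E}=(L_0;L_1,\ldots,L_t)$ of $G$ with properties {\bf Q} and {\bf R}, with every $L_i$ for $i\geq 1$ an odd path and every $f(\mathcal{E},i,j)\neq K_4$ an odd path, and then to induct on $t$. The base case $t=0$ gives $G=L_0$ as a cycle, a uniform umbrella, or an even $\theta$-graph distinct from $K_{2,3}$, each of which is transparently planar: a cycle is a simple closed curve; a uniform umbrella embeds by placing the apex inside the outer cycle with spokes drawn as disjoint arcs; an even $\theta$-graph embeds by drawing its three routes side by side in the plane.

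Alongside planarity I would carry an inductive invariant: every subpath that can still serve as an ``attachment location'' for a future ear---that is, every odd path $P=f(\mathcal{E},i,k)$ arising in the decomposition---lies on the boundary of a single face of the current embedding. The base case needs a small check: when $L_0$ is an odd cycle, the standard-decomposition condition that the even arcs $f_e(\mathcal{E},i)$ pairwise intersect forces all odd arcs $f_o(\mathcal{E},i)$ to lie in one common arc of $L_0$, hence on a single face; when $L_0$ is a uniform umbrella or an even $\theta$-graph, the spokes, rims, and routes are each on the boundary of a face in the standard embedding. For the inductive step I would add $L_t$ to a planar embedding of $G_{t-1}=L_0\cup\cdots\cup L_{t-1}$: by property {\bf Q} both endpoints of $L_t$ lie in a single previous location, giving the attachment subpath $P=f(\mathcal{E},l_t,t)$, and by the invariant $P$ bounds a single face $F$, so $L_t$ can be drawn as a simple arc inside $F$, splitting $F$ into two new faces each bounded by $L_t$ together with one arc of $P$.

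The main obstacle, and the part I expect to be the most delicate, is maintaining the invariant for all subsequent ears; this is exactly where property {\bf R} does the work. If some later ear $L_k$ has an endpoint in $I(P)$, then $f(\mathcal{E},l_t,k)$ is a \emph{proper} subpath of $P$, so $end(L_k)$ lies inside one of the two arcs into which $end(L_t)$ divides $P$, and that arc bounds exactly one of the two new faces. Attachment paths lying on $L_t$ itself are on the boundary of a new face automatically, and attachment paths disjoint from $P$ are untouched. This preserves the invariant, so $G_t$ is planar; iterating closes the induction and yields the corollary.
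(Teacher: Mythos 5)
Your strategy---induct along the ear-decomposition and draw each new ear inside a face whose boundary contains its attachment path---is close in spirit to what the paper does, but the inductive step has a genuine gap: the invariant that every attachment path lies on the boundary of a single face is \emph{not} maintained when the ears are processed in the given order $L_1,\dots,L_t$. Property {\bf R} cuts both ways. It does give the case you treat: if a later ear $L_m$ has an endpoint in $I(P)$ with $P=f(\mathcal{E},l_t,t)$, then $f(\mathcal{E},l_t,m)\subsetneq P$. But applied with the roles reversed (taking $j=t$ in {\bf R}), it permits $end(L_t)$ to sit inside the interior of another attachment path $A=f(\mathcal{E},i,m)$ of a \emph{later} ear, in which case {\bf R} only yields $f(\mathcal{E},i,t)\subsetneq A$, i.e.\ $A$ properly \emph{contains} the path you are currently attaching along; after you insert $L_t$, the path $A$ is split between the two new faces and your invariant dies. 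This is not a hypothetical: {\bf Q} and {\bf R} together produce exactly such laminar (nested) families of attachment paths, e.g.\ three ears attached to a single earlier path $L_r$ along nested odd subpaths $A_1\subsetneq A_2\subsetneq A_3$; if the decomposition lists the innermost ear first, then after two insertions no face boundary contains $A_3$, even though the graph is planar (all three ears can be drawn nested on one side). So your argument needs the ears processed from outermost to innermost, and nothing in the proposal arranges that. (A smaller point: ``the two arcs into which $end(L_t)$ divides $P$'' does not parse, since $end(L_t)$ is precisely the pair of ends of $P$.)

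The paper sidesteps all of this by running the induction in the opposite direction. Lemma \ref{S-mono-poset} produces an ear $L_i$ whose attachment path $f(\mathcal{E},k,i)$ has every internal vertex of degree $2$ in $G$ --- in effect an ``innermost'' ear with nothing attached inside its attachment path. One deletes $L_i$, invokes Lemma \ref{2-con-eard} (1) to keep the hypothesis $md(G')=\left\lfloor\frac{|G'|}{2}\right\rfloor$ for the induction, and then the degree-$2$ condition immediately forces $f(\mathcal{E},k,i)$ to lie on a face boundary of \emph{any} planar embedding of $G'$, so no global invariant about other attachment paths is needed. To repair your version you should either reorder the ears according to the poset structure underlying Lemma \ref{S-mono-poset} (always inserting an ear whose attachment path will receive no further attachments) or simply switch to the deletion form of the induction as in the paper.
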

\begin{proof}
By Theorem \ref{S-mono-mdex}, there is a standard ear-decomposition
$\mathcal{E}=\{L_0;L_1,\cdots,L_t\}$ of $G$
that has properties {\bf Q} and {\bf R}.
Since $G$ is a planar graph if $G$ is a cycle,
an umbrella or a $\theta$-graph, the result holds for $t=0$. Our
proof proceeds by induction on $t$. Suppose $t>0$. By Lemma \ref{S-mono-poset}, there are integers $k,i$ such that $f(\mathcal{E},k,i)$ is a path of order at least two, and $d_G(u)=2$
for each $u\in I(f(\mathcal{E},k,i))\cup I(L_i)$.
Let $G'$ be a graph obtained from $G$ by removing $L_i$.
By Lemma \ref{2-con-eard} (1), $md(G')=\left\lfloor\frac{|G'|}{2}\right\rfloor$.
By the induction hypothesis, $G'$ is a planar graph.
Since $d_G(u)=2$ for each $u\in I(f(\mathcal{E},k,i))$, there is a face $F$ of $G'$ such that $f(\mathcal{E},k,i)$ is a subpath of $F$.
Therefore, $L_i$ can be embedded in $F$ and $G$ is a planar graph.
\end{proof}


\begin{thebibliography}{1}

\bibitem {BCJLW} X. Bai, Y. Chen, M. Ji, X. Li, Y. Weng, W. Wu,
Proper disconnection of graphs, arXiv:1906.01832 [math.CO].

\bibitem {BL} X. Bai, X. Li,
Graph colorings under global structural conditions, arXiv:2008.07163 [math.CO].

\bibitem {BBY}
J. Bang-Jensen, T. Ballitto, A. Yeo,
Proper-walk connection number of graphs, {\it J. Graph Theory}, DOI: 10.1002/jgt.22609,
in press (2020), 1--23.

\bibitem {B}
J.A. Bondy, U.S.R. Murty, Graph Theory, GTM 244, Springer,
2008.

\bibitem {BFGM} V. Borozan, S. Fujita, A. Gerek, C. Magnant, Y. Manoussakis, L. Montero, Zs. Tuza,
Proper connection of graphs, {\it Discrete Math}. {\bf 312(17)} (2012), 2550--2560.

\bibitem{CY} Y. Caro, R. Yuster,
Colorful monochromatic connectivity, {\it Discrete Math.} {\bf 311} (2011), 1786--1792.

\bibitem {CDHHZ} G. Chartrand, S. Devereaux, T.W. Haynes, S.T. Hedetniemi, P. Zhang,
Rainbow disconnection in graphs, {\it Discuss. Math. Graph Theory} {\bf 38(4)} (2018), 1007--1021.

\bibitem{CJMZ} G. Chartrand, G.L. Johns, K.A. McKeon, P. Zhang,
Rainbow connection in graphs, {\it Math. Bohem.} {\bf 133} (2008), 85--98.

\bibitem{CLLW}
Y. Chen, P. Li, X. Li, Y. Weng,
Complexity results for the proper disconnection of graphs,
{\it Proceedings of 14th International Frontiers
of Algorithmics Workshop (FAW 2020), LNCS} No.12340.

\bibitem {PSC} P. Dankelmann, S. Mukwembi, H.C. Swart,
Average distance and vertex-connectivity, {\it J. Graph Theory} {\bf 62(2)} (2010), 157--177.

\bibitem {FKM} O. Favaron, M. Kouider, M. Mah\'{e}o,
Edge-vulnerability and mean distance, {\it Networks} {\bf 19} (1989), 493--504.

\bibitem {LL} P. Li, X. Li,
Monochromatic disconnection of graphs, accepted for publication in {\it Discrete Appl. Math.} arXiv:1901.01372 [math.CO].

\bibitem {LLi} P. Li, X. Li,
Monochromatic disconnection: Erd\H{o}s-Gallai-type problems and product graphs, arXiv:1904.08583 [math.CO].

\bibitem {P} J. Plesn\'{l}k,
On the sum of all distances in a graph or digraph, {\it J. Graph Theory} {\bf 8} (1984), 1--24.

\end{thebibliography}
\end{document}